\newtheorem{thm}{Theorem}[section]
\newtheorem{lem}{Lemma}[section]
\newtheorem{cor}{Corollary}[section]
\newtheorem{defi}{Definition}[section]
\newtheorem{ex}{Example}[section]
\newtheorem{rem}{Remark}[section]
\newtheorem*{ques}{\it{Question}}
\begin{document}

\title{Some results on shadowing and local entropy properties of dynamical systems}
\author{Noriaki Kawaguchi}
\subjclass[2020]{37B40, 37B65}
\keywords{shadowable points; entropy points; shadowing; h-expansive; s-limit shadowing}
\address{Research Institute of Science and Technology, Tokai University, 4-1-1 Kitakaname, Hiratsuka, Kanagawa 259-1292, Japan}
\email{gknoriaki@gmail.com}

\begin{abstract}
We consider some local entropy properties of dynamical systems under the \linebreak assumption of shadowing. In the first part, we give necessary and sufficient conditions for shadowable points to be certain entropy points. In the second part, we give some necessary and sufficient conditions for (non) h-expansiveness under the assumption of shadowing and chain transitivity; and use the result to present a counter-example for a question raised by Artigue et al. [Proc. Amer. Math. Soc. 150 (2022), 3369--3378].
\end{abstract}

\maketitle

\markboth{NORIAKI KAWAGUCHI}{Some results on shadowing and local entropy properties of dynamical systems}

\section{Introduction}

{\em Shadowing}, introduced by Anosov and Bowen \cite{A, B2}, is a feature of hyperbolic dynamical systems and has played an important role in the global theory of dynamical systems (see \cite{AH} or \cite{P} for background). It generally refers to a phenomenon in which coarse orbits, or {\em pseudo-orbits}, are approximated by true orbits. In \cite{M}, by splitting the global shadowing into pointwise shadowings, Morales introduced the notion of {\em shadowable points}, which gives a tool for a local description of the shadowing phenomena. The study of shadowable points has been extended to shadowable points for flows \cite{AV}; pointwise stability and persistence \cite{DLM, JLM, KD2, KDD, KLM}; shadowable measures \cite{S}; average shadowable and specification points \cite{DKD, KD1}; eventually shadowable points \cite{DJM}; shadowable points of set-valued dynamical systems \cite{LNY}; and so on.

In \cite{K2}, some sufficient conditions are given for a shadowable point to be an entropy point. Recall that the notion of {\em entropy points} is obtained by a concentration of positive topological entropy at a point \cite{YZ}. Also, in \cite{AR}, the notion of shadowable points is applied to obtain pointwise sufficient conditions for positive topological entropy (see also \cite{RA}). In the first part of this paper, we improve the result of \cite{K2} by giving necessary and sufficient conditions for shadowable points to be certain entropy points. The {\em h-expansiveness} is another local entropy property of dynamical systems \cite{B1}. In the second part of this paper, we present several necessary and sufficient conditions for (non) h-expansiveness under the assumption of shadowing and chain transitivity; and use the result to obtain a counter-example for a question in \cite{ACCV2}.

We begin with a definition. Throughout, $X$ denotes a compact metric space endowed with a metric $d$.

\begin{defi}
\normalfont
Given a continuous map $f\colon X\to X$ and $\delta>0$, a finite sequence $(x_i)_{i=0}^{k}$ of points in $X$, where $k>0$ is a positive integer, is called a {\em $\delta$-chain} of $f$ if $d(f(x_i),x_{i+1})\le\delta$ for every $0\le i\le k-1$.  A $\delta$-chain $(x_i)_{i=0}^{k}$ of $f$ with $x_0=x_k$ is said to be a {\em $\delta$-cycle} of $f$. 
\end{defi}

Let $f\colon X\to X$ be a continuous map. For any $x,y\in X$ and $\delta>0$, the notation $x\rightarrow_\delta y$ means that there is a $\delta$-chain $(x_i)_{i=0}^k$ of $f$ with $x_0=x$ and $x_k=y$. We write $x\rightarrow y$ if $x\rightarrow_\delta y$ for all $\delta>0$. We say that $x\in X$ is a {\em chain recurrent point} for $f$ if $x\rightarrow x$, or equivalently, for any $\delta>0$, there is a $\delta$-cycle $(x_i)_{i=0}^{k}$ of $f$ with $x_0=x_k=x$. Let $CR(f)$ denote the set of chain recurrent points for $f$. We define a relation $\leftrightarrow$ in
\[
CR(f)^2=CR(f)\times CR(f)
\]
by: for any $x,y\in CR(f)$, $x\leftrightarrow y$ if and only if $x\rightarrow y$ and $y\rightarrow x$. Note that $\leftrightarrow$ is a closed equivalence relation in $CR(f)^2$ and satisfies $x\leftrightarrow f(x)$ for all $x\in CR(f)$. An equivalence class $C$ of $\leftrightarrow$ is called a {\em chain component} for $f$. We denote by $\mathcal{C}(f)$ the set of chain components for $f$.

A subset $S$ of $X$ is said to be $f$-invariant if $f(S)\subset S$. For an $f$-invariant subset $S$ of $X$, we say that $f|_S\colon S\to S$ is {\em chain transitive} if for any $x,y\in S$ and $\delta>0$, there is a $\delta$-chain $(x_i)_{i=0}^k$ of $f|_S$ with $x_0=x$ and $x_k=y$.

\begin{rem}
\normalfont
The following properties hold
\begin{itemize}
\item $CR(f)=\bigsqcup_{C\in\mathcal{C}(f)}C$,
\item Every $C\in\mathcal{C}(f)$ is a closed $f$-invariant subset of $CR(f)$,
\item $f|_C\colon C\to C$ is chain transitive for all $C\in\mathcal{C}(f)$,
\item For any $f$-invariant subset $S$ of $X$, if $f|_S\colon S\to S$ is chain transitive, then $S\subset C$ for some $C\in\mathcal{C}(f)$.
\end{itemize}
\end{rem}

Let $f\colon X\to X$ be a continuous map. For $x\in X$, we define a subset $C(x)$ of $X$ by
\[
C(x)=\{x\}\cup\{y\in X\colon x\rightarrow y\}.
\] 
By this definition, we easily see that for any $x\in X$, $C(x)$ is a closed $f$-invariant subset of $X$. We say that a closed $f$-invariant subset $S$ of $X$ is {\em chain stable} if for any $\epsilon>0$, there is $\delta>0$ for which every $\delta$-chain $(x_i)_{i=0}^k$ of $f$ with $x_0\in S$ satisfies $d(x_i,S)\le\epsilon$ for all $0\le i\le k$. A proof of the following lemma is given in Section 3.

\begin{lem}
$C(x)$ is chain stable for all $x\in X$.
\end{lem}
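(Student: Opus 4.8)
The plan is to argue by contradiction and to reduce chain stability of $C(x)$ to a statement about the one-step reachability sets $B_\delta(x):=\{y\in X : x\to_\delta y\}$, namely that they shrink into every neighbourhood of $C(x)$ as $\delta\to 0$. Throughout I write $N_\epsilon(C(x))=\{y\in X : d(y,C(x))<\epsilon\}$.

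I would first record the elementary perturbation fact that if $x\to_{\delta_1}y$ and $d(y,z)\le\delta_2$, then $x\to_{\delta_1+\delta_2}z$ (replace the final point of a witnessing chain by $z$; the mesh increases by at most $d(y,z)$). Combined with concatenation of chains this gives the key reduction: if $(x_i)_{i=0}^k$ is a $\delta$-chain with $x_0\in C(x)$, then $x\to_\delta x_i$ for every $1\le i\le k$. Indeed, if $x_0=x$ this is immediate from the sub-chain $(x_0,\dots,x_i)$, while if $x_0\neq x$ then $x\to x_0$ by definition of $C(x)$, so $x\to_\delta x_0$, and concatenating a $\delta$-chain from $x$ to $x_0$ with $(x_0,\dots,x_i)$ witnesses $x\to_\delta x_i$. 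Since also $x_0\in C(x)\subset N_\epsilon(C(x))$ trivially, it follows that to establish chain stability it suffices to prove: for every $\epsilon>0$ there is $\delta>0$ with $B_\delta(x)\subset N_\epsilon(C(x))$.

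To do this I would note the ``limit'' inclusion $\bigcap_{\delta>0}B_\delta(x)\subset C(x)$, which is immediate from the definitions: if $x\to_\delta y$ for all $\delta>0$ then $x\to y$, hence $y\in C(x)$. Now suppose the reduced claim fails for some $\epsilon>0$; then for each $n\ge 1$ there is a $(1/n)$-chain from $x$ to a point $w_n$ with $d(w_n,C(x))\ge\epsilon$. By compactness of $X$, pass to a subsequence with $w_n\to w$; then $d(w,C(x))\ge\epsilon$, so $w\notin C(x)$. On the other hand, fix $\delta>0$; for all large $n$ we have $1/n<\delta/2$ and $d(w_n,w)<\delta/2$, so $x\to_{1/n}w_n$ together with the perturbation fact yields $x\to_\delta w$. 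As $\delta>0$ was arbitrary, $x\to w$, hence $w\in C(x)$ --- a contradiction. This produces the required $\delta$ and, by the reduction, finishes the proof; that $C(x)$ is closed and $f$-invariant was already observed in the text, so ``chain stable'' is meaningful for it.

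The only genuine obstacle is bridging the gap between a single $\delta$-chain, which has a fixed mesh, and the mesh-free relation $\to$ that defines $C(x)$; the perturbation observation together with compactness of $X$ is exactly what does this. The remaining ingredients --- concatenation of chains and the minor bookkeeping separating the case $x_0=x$ from $x_0\in C(x)\setminus\{x\}$ (needed precisely because $x\to_\delta x$ is not automatic) --- are routine.
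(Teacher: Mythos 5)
Your proof is correct and follows essentially the same route as the paper's: argue by contradiction, use compactness to extract a limit point $w\notin C(x)$ of far-away chain endpoints, and show $x\rightarrow w$ by perturbing the endpoint of small-mesh chains, contradicting the definition of $C(x)$. The only organizational difference is that you normalize all chains to start at $x$ up front (prepending a chain witnessing $x\rightarrow x_0$), whereas the paper takes limits of both endpoints and invokes transitivity of $\rightarrow$ at the end — the same case split on $x_0=x$ versus $x_0\in C(x)\setminus\{x\}$ appears in both.
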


\begin{rem}
\normalfont
For any $x\in X$, since $C(x)$ is chain stable, it satisfies the following properties
\begin{itemize} 
\item $CR(f|_{C(x)})=C(x)\cap CR(f)$,
\item for every $C\in\mathcal{C}(f)$, $C\subset C(x)$ if and only if $C\cap C(x)\ne\emptyset$,
\item $\mathcal{C}(f|_{C(x)})=\{C\in\mathcal{C}(f)\colon C\subset C(x)\}$.
\end{itemize}
\end{rem}

Let $f\colon X\to X$ be a continuous map and let $\xi=(x_i)_{i\ge0}$ be a sequence of points in $X$. For $\delta>0$, $\xi$ is called a {\em $\delta$-pseudo orbit} of $f$ if $d(f(x_i),x_{i+1})\le\delta$ for all $i\ge0$. For $\epsilon>0$, $\xi$ is said to be {\em $\epsilon$-shadowed} by $x\in X$ if $d(f^i(x),x_i)\leq \epsilon$ for all $i\ge 0$.

\begin{defi}
\normalfont
Given a continuous map $f\colon X\to X$, $x\in X$ is called a {\em shadowable point} for $f$ if for any $\epsilon>0$, there is $\delta>0$ such that every $\delta$-pseudo orbit $(x_i)_{i\ge0}$ of $f$ with $x_0=x$ is $\epsilon$-shadowed by some $y\in X$. We denote by $Sh(f)$ the set of shadowable points for $f$.
\end{defi}

For a continuous map $f\colon X\to X$ and a subset $S$ of $X$, we say that $f$ has the {\em shadowing on $S$} if for any $\epsilon>0$, there is $\delta>0$ such that every $\delta$-pseudo orbit $(x_i)_{i\ge0}$ of $f$ with $x_i\in S$ for all $i\ge0$ is $\epsilon$-shadowed by some $y\in X$. We say that $f$ has the {\em shadowing property} if $f$ has the shadowing on $X$.

The next lemma is a basis for the formulation of Theorems 1.1 and 1.2.

\begin{lem}
For a continuous map $f\colon X\to X$ and $x\in X$, the following conditions are equivalent
\begin{itemize}
\item[(1)] $x\in Sh(f)$,
\item[(2)] $C(x)\subset Sh(f)$,
\item[(3)] $f$ has the shadowing on $C(x)$. 
\end{itemize}
\end{lem}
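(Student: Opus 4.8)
The plan is to establish the cycle of implications $(2)\Rightarrow(1)\Rightarrow(3)\Rightarrow(2)$. The implication $(2)\Rightarrow(1)$ is immediate, since $x\in C(x)$ by the definition of $C(x)$, so $C(x)\subset Sh(f)$ forces $x\in Sh(f)$. The two substantive implications are $(1)\Rightarrow(3)$, which I will prove by a concatenation-of-pseudo-orbits argument, and $(3)\Rightarrow(2)$, which I will prove by a projection argument relying on the chain stability of $C(x)$ (Lemma 1.1).

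For $(1)\Rightarrow(3)$: assume $x\in Sh(f)$ and fix $\epsilon>0$. Choose $\delta_0>0$ so that every $\delta_0$-pseudo orbit of $f$ with first term $x$ is $\epsilon$-shadowed, and put $\delta=\delta_0$. Given a $\delta$-pseudo orbit $(x_i)_{i\ge0}$ of $f$ with $x_i\in C(x)$ for all $i$: if $x_0=x$ we are done directly; otherwise $x_0\in C(x)\setminus\{x\}$, hence $x\rightarrow x_0$, so there is a $\delta_0$-chain $(w_j)_{j=0}^{m}$ of $f$ with $w_0=x$ and $w_m=x_0$. The concatenated sequence
\[
\xi=(x,w_1,\dots,w_{m-1},x_0,x_1,x_2,\dots)
\]
is then a $\delta_0$-pseudo orbit of $f$ with first term $x$ (the junction estimate $d(f(w_{m-1}),x_0)\le\delta_0$ is part of the chain condition, and $d(f(x_i),x_{i+1})\le\delta=\delta_0$). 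So $\xi$ is $\epsilon$-shadowed by some $y\in X$, and since the term $x_j$ sits in position $m+j$ of $\xi$, the point $z=f^m(y)$ satisfies $d(f^j(z),x_j)\le\epsilon$ for all $j\ge0$; that is, $(x_i)_{i\ge0}$ is $\epsilon$-shadowed by $z$, so $f$ has the shadowing on $C(x)$.

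For $(3)\Rightarrow(2)$: assume $f$ has the shadowing on $C(x)$, fix $y\in C(x)$ and $\epsilon>0$. Take $\delta_1>0$ witnessing the shadowing on $C(x)$ at scale $\epsilon/2$; using uniform continuity of $f$ on compact $X$, choose $\eta\in(0,\epsilon/2]$ with $\eta\le\delta_1/3$ and with $d(a,b)\le\eta\Rightarrow d(f(a),f(b))\le\delta_1/3$; by Lemma 1.1, $C(x)$ is chain stable, so fix $\delta_2>0$ such that every $\delta_2$-chain of $f$ starting in $C(x)$ stays within $\eta$ of $C(x)$, and set $\delta=\min\{\delta_1/3,\delta_2\}$. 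Given a $\delta$-pseudo orbit $(x_i)_{i\ge0}$ with $x_0=y$, every finite prefix is a $\delta_2$-chain starting in $C(x)$, so $d(x_i,C(x))\le\eta$ for all $i$; pick $y_i\in C(x)$ with $y_0=y$ and $d(x_i,y_i)\le\eta$. A triangle-inequality estimate shows $(y_i)_{i\ge0}$ is a $\delta_1$-pseudo orbit of $f$ contained in $C(x)$, hence $\epsilon/2$-shadowed by some $z\in X$, whence $d(f^i(z),x_i)\le\epsilon/2+\eta\le\epsilon$ for all $i$; thus $y\in Sh(f)$, giving $C(x)\subset Sh(f)$.

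The main point to handle carefully is the index shift in $(1)\Rightarrow(3)$: $\epsilon$-shadowing the lengthened pseudo orbit $\xi$ produces a shadowing point for $(x_i)_{i\ge0}$ only after applying the power $f^m$, where $m$ is the (a priori unbounded) length of the connecting chain. The other delicate step is that $(3)\Rightarrow(2)$ cannot be carried out by pure concatenation, because a pseudo orbit merely starting in $C(x)$ need not stay in $C(x)$; here the chain stability of $C(x)$ from Lemma 1.1, combined with uniform continuity, is precisely what lets one push the pseudo orbit back onto $C(x)$ at the cost of an arbitrarily small error. I expect this to be the main obstacle to getting the argument exactly right.
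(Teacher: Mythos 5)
Your proof is correct, and the two substantive implications use the same two core ideas as the paper (prepending a connecting $\delta$-chain and then shifting the shadowing point by $f^m$; projecting a pseudo-orbit back onto $C(x)$ via its chain stability from Lemma 1.1), but you arrange the cycle differently: the paper proves $(1)\Rightarrow(2)\Rightarrow(3)\Rightarrow(1)$, while you prove $(2)\Rightarrow(1)\Rightarrow(3)\Rightarrow(2)$. The difference is not cosmetic. The paper's step $(2)\Rightarrow(3)$ must upgrade the pointwise statement ``every point of the closed set $C(x)$ is shadowable'' to a uniform $\delta$ valid for all pseudo-orbits through $C(x)$, and for this it invokes Lemma 2.4 of \cite{K1}. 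Your route avoids that external reference entirely: in $(1)\Rightarrow(3)$ the single $\delta_0$ coming from the shadowableness of the one point $x$ serves every pseudo-orbit starting anywhere in $C(x)$, because each such orbit can be pre-concatenated with a $\delta_0$-chain from $x$ (the a priori unbounded length $m$ of that chain is harmless, exactly as you note, since only the existence of a shadowing point is needed). Your $(3)\Rightarrow(2)$ is the paper's $(3)\Rightarrow(1)$ argument carried out at an arbitrary $y\in C(x)$ rather than at $x$ itself, which is legitimate since chain stability of $C(x)$ applies to chains starting at any point of $C(x)$; your explicit $\delta_1/3$ bookkeeping for why the projected sequence is a $\delta_1$-pseudo orbit is a detail the paper leaves implicit. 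What the paper's arrangement buys is that the pointwise implication $(1)\Rightarrow(2)$ is exhibited directly; what yours buys is a self-contained proof.
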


Next, we recall the definition of entropy points from \cite{YZ}. Let $f\colon X\to X$ be a continuous map. For $n\ge1$, the metric $d_n$ on $X$ is defined by
\[
d_n(x,y)=\max_{0\le i\le n-1}d(f^i(x),f^i(y))
\]
for all $x,y\in X$. For $n\ge1$ and $r>0$, a subset $E$ of $X$ is said to be {\em $(n,r)$-separated} if $d_n(x,y)>r$ for all $x,y\in E$ with $x\ne y$. Let $K$ be a subset of $X$. For $n\ge1$ and $r>0$, let $s_n(f,K,r)$ denote the largest cardinality of an $(n,r)$-separated subset of $K$. We define $h(f,K,r)$ and $h(f,K)$ by
\[
h(f,K,r)=\limsup_{n\to\infty}\frac{1}{n}\log{s_n(f,K,r)}
\]
and
\[
h(f,K)=\lim_{r\to0}h(f,K,r).
\]
We also define the topological entropy $h_{\rm top}(f)$ of $f$ by $h_{\rm top}(f)=h(f,X)$.
 
\begin{defi}
\normalfont
Let $f\colon X\to X$ be a continuous map. For $x\in X$, we denote by $\mathcal{K}(x)$ the set of closed neighborhoods of $x$.
\begin{itemize}
\item[(1)] $Ent(f)$ is the set of $x\in X$ such that $h(f,K)>0$ for all $K\in\mathcal{K}(x)$,
\item[(2)] For $r>0$, $Ent_r(f)$ is the set of $x\in X$ such that $h(f,K,r)>0$ for all $K\in\mathcal{K}(x)$,
\item[(3)] For $r>0$ and $b>0$, $Ent_{r,b}(f)$ is the set of $x\in X$ such that $h(f,K,r)\ge b$ for all $K\in\mathcal{K}(x)$. 
\end{itemize}
\end{defi}

\begin{rem}
\normalfont
The following properties hold
\begin{itemize}
\item $Ent(f)$, $Ent_r(f)$, $r>0$, and $Ent_{r,b}(f)$, $r,b>0$, are closed $f$-invariant subsets of $X$,
\item
\[
Ent(f)\subset Ent_r(f)\subset Ent_{r,b}(f)
\]
for all $r,b>0$,
\item for any closed subset $K$ of $X$ and $r>0$, if $h(f,K,r)>0$, then $K\cap Ent_r(f)\ne\emptyset$,
\item for any closed subset $K$ of $X$ and $r,b>0$, if $h(f,K,r)\ge b$, then $K\cap Ent_{r,b}(f)\ne\emptyset$.
\end{itemize}
\end{rem}

For a continuous map $f\colon X\to X$, we define $\mathcal{C}_o(f)$ as the set of $C\in\mathcal{C}(f)$ such that $C$ is a periodic orbit or an odometer. We also define $\mathcal{C}_{no}(f)$ to be
\[
\mathcal{C}_{no}(f)=\mathcal{C}(f)\setminus\mathcal{C}_o(f).
\]
We refer to Section 2.1 for various matters related to this definition.

The first theorem characterizes the shadowable points that are entropy points of a certain type.
  
\begin{thm}
Given a continuous map $f\colon X\to X$ and $x\in Sh(f)$,
\[
x\in\bigcup_{r>0}Ent_r(f)
\]
if and only if $\mathcal{C}_{no}(f|_{C(x)})\ne\emptyset$.
\end{thm}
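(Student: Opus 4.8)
The plan is to prove the two implications separately. Throughout, by Lemma~1.2 the hypothesis $x\in Sh(f)$ means that $f$ has the shadowing on $C(x)$, hence also on every subset of $C(x)$, in particular on every chain component of $f$ contained in $C(x)$; and by Lemma~1.1 the set $C(x)$ is chain stable. Two facts of ``shadowing $+$ chain transitivity'' type, which I expect to be recorded in Section~2.1, will be used: (i) a chain transitive map with the shadowing is a periodic orbit, an odometer, or has positive topological entropy; and (ii) periodic orbits and odometers have zero topological entropy, so that, since topological entropy is carried by the chain recurrent set (variational principle), $h(f,C(x))=\sup_{C\in\mathcal{C}(f|_{C(x)})}h(f,C)$.

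For ``$\Leftarrow$'', suppose $C$ is a chain component of $f$ with $C\subset C(x)$ and $C\notin\mathcal{C}_o(f)$. Since $f$ has the shadowing on $C$ and $f|_C$ is chain transitive, (i) gives $h(f,C)>0$; fix $r_0>0$ with $c:=h(f,C,r_0)>0$. I would then transfer this complexity to an arbitrary $K\in\mathcal{K}(x)$: choose $\epsilon>0$ with $\overline{B(x,\epsilon)}\subset K$ and $\epsilon<r_0/4$, and let $\delta>0$ be a shadowing constant on $C(x)$ for $\epsilon$. Since $x\in C(x)$, $C(x)$ is chain stable, and $f|_C$ is chain transitive, a compactness argument (first approximate a $\delta$-chain issued from $x$ by one lying in $C(x)$, then steer through $C$) yields $L\ge1$ such that every $e\in C$ is reached from $x$ by a $\delta$-chain in $C(x)$ of some length $m_e\le L$. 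For each $n\ge1$ pick an $(n,r_0)$-separated set $E_n\subset C$ with $|E_n|=s_n(f,C,r_0)$; for $e\in E_n$ let $\xi_e$ be the $\delta$-pseudo orbit in $C(x)$ that first follows such a $\delta$-chain from $x$ to $e$ and then the forward orbit of $e$, and let $w_e\in X$ be a point $\epsilon$-shadowing $\xi_e$. Then $w_e\in\overline{B(x,\epsilon)}\subset K$, and whenever $m_e=m_{e'}$ the $(n,r_0)$-separation of the orbit segments of $e$ and $e'$ forces $d_{L+n}(w_e,w_{e'})>r_0/2$. Grouping $E_n$ by the value of $m_e\in\{1,\dots,L\}$ and keeping the largest group gives $s_{L+n}(f,K,r_0/2)\ge s_n(f,C,r_0)/L$, and letting $n\to\infty$ yields $h(f,K,r_0/2)\ge h(f,C,r_0)=c>0$. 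As $K$ was arbitrary, $x\in Ent_{r_0/2}(f)\subset\bigcup_{r>0}Ent_r(f)$.

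For ``$\Rightarrow$'' I would argue contrapositively: assume $\mathcal{C}_{no}(f|_{C(x)})=\emptyset$, i.e.\ every chain component of $f$ inside $C(x)$ is a periodic orbit or an odometer, and show $x\notin Ent_r(f)$ for all $r>0$. By (ii) we get $h(f,C(x))=0$, so $h(f,C(x),s)=0$ for every $s>0$. Now fix $r>0$. Using chain stability of $C(x)$ and continuity of $f$, pick a closed neighborhood $K$ of $x$ small enough that every forward orbit issued from $K$ stays within the $\epsilon$-neighborhood of $C(x)$, where $\epsilon>0$ is to be chosen small depending on $r$; using the shadowing on $C(x)$ as well, such orbit segments are shadowed to within $O(\epsilon)$ by points whose orbits remain $O(\epsilon)$-close to $C(x)$. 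Comparing an $(n,r)$-separated subset of $K$ with the resulting near-$C(x)$ orbits and invoking $h(f,C(x))=0$, one obtains $h(f,K,r)=0$; hence $x\notin Ent_r(f)$, and since $r$ was arbitrary, $x\notin\bigcup_{r>0}Ent_r(f)$.

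The hard part is the last estimate in ``$\Rightarrow$'': deriving $h(f,K,r)=0$ for a small neighborhood $K$ of $x$ from $h(f,C(x))=0$. This is not a matter of continuity of entropy — neighborhoods of a zero-entropy invariant set shrinking down to it may still carry positive entropy at a fixed scale — so one genuinely needs the shadowing on $C(x)$ (together with chain stability) to bound the $\epsilon$-pseudo orbit complexity of $C(x)$, hence the complexity of its small neighborhoods, by the genuine orbit complexity of $C(x)$; this is exactly where $x\in Sh(f)$ is used, and I expect it to be isolated as a separate lemma. The direction ``$\Leftarrow$'', by contrast, is a straightforward construction whose only delicate ingredient is the uniform bound $L$ on the lengths of the connecting chains.
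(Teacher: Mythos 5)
Your proof routes both implications through the claim that, for $x\in Sh(f)$, the condition $\mathcal{C}_{no}(f|_{C(x)})\ne\emptyset$ is equivalent to $h(f,C(x))>0$: the ``$\Leftarrow$'' direction uses your fact (i) to produce a chain component $C$ with $h(f,C)>0$, and the ``$\Rightarrow$'' direction argues contrapositively from $h(f,C(x))=0$. That equivalence is not in the paper (nothing like fact (i) appears in Section~2.1) and is precisely what separates Theorem~1.1 from Theorem~1.2: if it held, the sets $\bigcup_{r>0}Ent_r(f)$ and $\bigcup_{r,b>0}Ent_{r,b}(f)$ would coincide on $Sh(f)$, which the paper's two separate characterizations are designed to distinguish. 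Fact (i) is the first genuine gap: the paper's ``shadowing on $C$'' only supplies shadowing points in $X$, not in $C$, so the dichotomy you invoke (chain transitive plus shadowing implies odometer/periodic orbit or positive entropy) does not apply to $(C,f|_C)$ as an intrinsic system, and the complexity one can extract from $C\notin\mathcal{C}_o$ is carried by orbits \emph{near} $C$ rather than by $C$ itself. The paper avoids $h(f,C)$ entirely: by Corollary~2.1, $C\notin\mathcal{C}_o$ means $f|_C$ is not chain continuous, which gives, for every $\delta$, a pair of $\delta$-chains in $C$ with common endpoints at the start and termini $e$-apart; concatenating $2^N$ branching choices of such pairs after a connecting chain from $x$ of bounded length, and shadowing the results, yields an $(L+N(K+M),r)$-separated subset of $B_\epsilon(x)$ and hence $h(f,B_\epsilon(x),r)\ge\frac{\log 2}{K+M}>0$. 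Your transfer step with the uniform bound $L$ is essentially the paper's Lemma~3.1 and is sound, but it is applied to a hypothesis you have not established.

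The second gap is the step you flag yourself in ``$\Rightarrow$''. The comparison you describe --- project an $(n,r)$-separated subset of a small $K\in\mathcal{K}(x)$ onto $\delta$-pseudo-orbits in $C(x)$ and invoke zero entropy of $C(x)$ --- forces $K$ to shrink as $\delta\to0$, so at a fixed scale $r$ it only yields $\inf_{K\in\mathcal{K}(x)}h(f,K,r)=0$, whereas $x\notin Ent_r(f)$ requires a single $K$ with $h(f,K,r)=0$. (This is exactly why that comparison does prove the ``only if'' of Theorem~1.2, where the uniform bound $b$ over all $K$ rescues the limit in $\delta$, but cannot prove the ``only if'' of Theorem~1.1.) The paper's actual argument is of a different nature: assuming $x\in Ent_r(f)$, it covers the transient part of $C(x)$ (the points at distance at least $\epsilon$ from $CR(f|_{C(x)})$) by finitely many small sets each visited at most once by any orbit, counts itineraries to show that the superpolynomial growth of $s_n(f,K,r)$ forces two $(n,r)$-separated points of $K$ with identical itineraries, and extracts from such a pair a violation of chain continuity of $f|_{CR(f|_{C(x)})}$, contradicting Lemma~2.1. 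Neither this counting idea nor a workable substitute appears in your sketch, so the ``$\Rightarrow$'' direction remains unproved.
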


The following theorem characterizes the shadowable points that are entropy points of other type.

\begin{thm}
Given a continuous map $f\colon X\to X$ and $x\in Sh(f)$,
\[
x\in\bigcup_{r,b>0}Ent_{r,b}(f)
\]
if and only if $h_{\rm top}(f|_{C(x)})=h(f,C(x))>0$.
\end{thm}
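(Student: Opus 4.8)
First note that $h_{\rm top}(f|_{C(x)})=h(f,C(x))$ holds automatically, since $C(x)$ is a closed $f$-invariant set and hence the Bowen metrics $d_n$ for $f$ and for $f|_{C(x)}$ agree on $C(x)$; so the condition to be characterized is just $h(f,C(x))>0$. I would prove the two implications separately. For both, the starting point is that $x\in Sh(f)$ gives, by Lemma 1.2, that $f$ has the shadowing on $C(x)$ and that $C(x)$ is chain stable (Lemma 1.1).

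\textbf{The ``if'' direction (constructive).} Suppose $h(f,C(x))>0$. Combining the variational principle for the compact system $f|_{C(x)}$ with the fact that the support of an ergodic measure is transitive, hence chain transitive, hence contained in a single chain component (which by the Remark after Lemma 1.1 is some $C\in\mathcal{C}(f)$ with $C\subset C(x)$), one gets $h(f,C(x))=\sup\{h_{\rm top}(f|_C):C\in\mathcal{C}(f),\ C\subset C(x)\}$; so fix a chain component $C\subset C(x)$ and $r_0>0$ with $\beta:=h(f,C,r_0)>0$. Since $f|_C$ is chain transitive, a compactness argument (covering $C\times C$ by the sets of pairs joined by a $\delta$-chain of a fixed length) shows that for every $\delta>0$ there is $M(\delta)$ with: any two points of $C$ are joined by a $\delta$-chain in $C$ of length at most $M(\delta)$. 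Also, since $x\in C(x)\supset C$ and $C(x)$ is chain stable, a projection argument shows that for every $\delta>0$ some fixed $c_0\in C$ is reached from $x$ by a $\delta$-chain all of whose points lie in $C(x)$. Now let $K$ be an arbitrary closed neighborhood of $x$, pick $\epsilon\le r_0/8$ with $\overline{B}(x,\epsilon)\subset K$, let $\delta$ be a shadowing constant for precision $\epsilon$ for the shadowing of $f$ on $C(x)$, fix the $x\to c_0$ chain in $C(x)$ of length $N_0(\delta)$, and for large $n$ take an $(n,r_0)$-separated set $E_n\subset C$ with $|E_n|\ge e^{n(\beta-o(1))}$. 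For each $y\in E_n$ concatenate the fixed $x\to c_0$ chain, a $\delta$-chain in $C$ from $c_0$ to $y$ of length $\le M(\delta)$, and the orbit $y,f(y),f^2(y),\dots$; this is a $\delta$-pseudo orbit of $f$ with all points in $C(x)$. Pigeonholing on the length of the middle chain, I may pass to $E_n'\subset E_n$ with $|E_n'|\ge|E_n|/M(\delta)$ for which $y$ occurs at a common index $m$ (independent of $n$). Shadowing in $C(x)$ produces $z_y$ with $d(z_y,x)\le\epsilon$, so $z_y\in K$, and $d(f^{m+i}(z_y),f^i(y))\le\epsilon$ for $0\le i\le n-1$; hence $\{z_y:y\in E_n'\}$ is an $(m+n,r_0/2)$-separated subset of $K$ of size $\ge|E_n|/M(\delta)$. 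Since $m$ does not depend on $n$, letting $n\to\infty$ gives $h(f,K,r_0/2)\ge\beta$; as $K$ was arbitrary, $x\in Ent_{r_0/2,\beta}(f)\subset\bigcup_{r,b>0}Ent_{r,b}(f)$.

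\textbf{The ``only if'' direction (reduction).} Suppose $x\in Ent_{r,b}(f)$, so $h(f,\overline{B}(x,1/m),r)\ge b$ for all $m$. Let $\omega$ be a modulus of continuity of $f$. Since $C(x)$ is chain stable, for each $\epsilon>0$ there is $m_0$ so that for $m\ge m_0$ every orbit starting in $\overline{B}(x,1/m)$ stays in $N(C(x),\epsilon):=\{p:d(p,C(x))\le\epsilon\}$ (prepend $x$ to the orbit to get an $\omega(1/m)$-chain from $x\in C(x)$). Hence every $(n,r)$-separated subset of $\overline{B}(x,1/m)$ is contained in the maximal invariant set $\Lambda_\epsilon:=\bigcap_{i\ge0}f^{-i}(N(C(x),\epsilon))$, so $h(f,\Lambda_\epsilon,r)\ge b$ for all $\epsilon>0$, while $\Lambda_\epsilon\downarrow C(x)$ as $\epsilon\to0$. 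It remains to show that the shadowing of $f$ on $C(x)$ forces $h(f,C(x))>0$; I would isolate this as the lemma: if $x\in Sh(f)$ then $h(f,\Lambda_\epsilon)\to h(f,C(x))$ as $\epsilon\to0$. Granting it, $h(f,C(x))=\lim_{\epsilon\to0}h(f,\Lambda_\epsilon)\ge\lim_{\epsilon\to0}h(f,\Lambda_\epsilon,r)\ge b>0$, and $h_{\rm top}(f|_{C(x)})=h(f,C(x))$ as noted. To prove the lemma I would use a measure-theoretic route: for each small $\epsilon$ a standard construction (as in the proof of the variational principle) gives an ergodic $\mu_\epsilon$ with $\mathrm{supp}(\mu_\epsilon)\subset\Lambda_\epsilon$ and $h_{\mu_\epsilon}(f)\ge b$, with $\mathrm{supp}(\mu_\epsilon)$ inside a chain component $C_\epsilon$; then, using the shadowing on $C(x)$ together with $\Lambda_\epsilon\downarrow C(x)$, show that along a subsequence either some $C_\epsilon\subset C(x)$ or the $\mu_\epsilon$ can be replaced by measures supported in $C(x)$ without losing entropy.

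\textbf{Main obstacle.} The hard part is exactly this last step, i.e. passing from ``entropy at scale $r$ in every neighborhood of $C(x)$'' to ``entropy inside $C(x)$''. The naive attempt — project an orbit segment that stays $\epsilon$-close to $C(x)$ to a pseudo orbit in $C(x)$, shadow it there, and iterate — fails because the shadowing precision need not be smaller than $\epsilon$, so the neighborhoods do not shrink under iteration. The resolution must exploit chain stability of $C(x)$ more structurally: distinct chain components cannot Hausdorff-accumulate onto one another, and under shadowing the chain components inside $C(x)$ are isolated enough that the chain recurrent part of $\Lambda_\epsilon$ collapses onto $CR(f|_{C(x)})=\bigsqcup_{C\subset C(x)}C$ while carrying the entropy along; making this precise is where the real work lies, and I would devote a separate lemma (placed in the section where Lemma 1.1 is proved) to it.
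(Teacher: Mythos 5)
Your ``if'' direction is essentially sound, but it is heavier than it needs to be: the paper does not invoke the variational principle or reduce to a single chain component. It simply observes that $h(f,C(x),r)>0$ forces $C(x)\cap Ent_{r,b}(f)\ne\emptyset$ for any $0<b\le h(f,C(x),r)$ (one of the bullet points in Remark 1.3), and then transfers the entropy point from some $y\in C(x)$ back to $x$ by a single concatenate-and-shadow lemma (Lemma 3.1): splice a fixed $\delta$-chain from $x$ to $y$ onto the orbit segments of an $(n,r)$-separated subset of a small neighborhood of $y$, shadow, and read off a $(k+n,s)$-separated subset of $B_\epsilon(x)$ for any $s<r$. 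This is the same chain-splicing you perform, but your pigeonhole on chain lengths and the ergodic-support argument are both avoidable.

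The ``only if'' direction is where the real issue lies, and you have correctly located the obstacle but not overcome it. Passing from $h(f,\Lambda_\epsilon,r)\ge b$ for all $\epsilon>0$ to $h(f,C(x))>0$ is not a routine limit: entropy need not be upper semicontinuous along $\Lambda_\epsilon\downarrow C(x)$ without further hypotheses (controlling exactly this defect is what h-expansiveness is about, cf.\ Section 4), and shadowing on $C(x)$ only produces shadowing orbits in $X$, not in $C(x)$, so neither your measure-theoretic sketch nor a ``project, shadow, iterate'' scheme closes the argument; your proposed lemma $h(f,\Lambda_\epsilon)\to h(f,C(x))$ is left unproved and is doubtful at this level of generality. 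The missing tool is Misiurewicz's pseudo-orbit characterization of topological entropy (the paper's Lemma 3.2): $h_{\rm top}(g)=\lim_{r\to0}\lim_{\delta\to0}\limsup_{n\to\infty}\frac{1}{n}\log s_n(g,X,r,\delta)$, where $s_n(g,X,r,\delta)$ counts pairwise $(n,r)$-separated $\delta$-chains. With it the argument is short and uses only chain stability, not shadowing: orbit segments of points in a small closed neighborhood $K$ of $x$ stay close to $C(x)$, hence project to $\delta$-chains of $f|_{C(x)}$; a maximal $(n,r_0)$-separated subset of $K$ projects to a family of pairwise $(n,s)$-separated $\delta$-chains in $C(x)$ for any $s<r_0$ (losing only twice the projection error in the separation constant), so $\limsup_{n\to\infty}\frac{1}{n}\log s_n(f,C(x),s,\delta)\ge b$ for every $\delta>0$; letting $\delta\to0$ and $r\to 0$ in Misiurewicz's formula gives $h_{\rm top}(f|_{C(x)})\ge b>0$ directly, with no need to produce any orbit or invariant measure inside $C(x)$.
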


\begin{rem}
\normalfont
In \cite{YZ}, a point of
\[
\bigcup_{r,b>0}Ent_{r,b}(f)
\]
is called a {\em uniform entropy point} for $f$.
\end{rem}

\begin{rem}
\normalfont
In Section 3, we give an example of a continuous map $f\colon X\to X$ such that
\begin{itemize}
\item $f$ has the shadowing property and so satisfies $X=Sh(f)$,
\item
\[
Ent(f)\setminus\bigcup_{r>0}Ent_r(f)
\]
is a non-empty set.
\end{itemize}
\end{rem}

Before we state the next theorem, we introduce some definitions.

\begin{defi}
\normalfont
For a continuous map $f\colon X\to X$ and $(x,y)\in X^2$,
\begin{itemize}
\item $(x,y)$ is called a {\em distal pair} for $f$ if
\[
\liminf_{i\to\infty}d(f^i(x),f^i(y))>0,
\]
\item $(x,y)$ is called a {\em proximal pair} for $f$ if
\[
\liminf_{i\to\infty}d(f^i(x),f^i(y))=0,
\]
\item $(x,y)$ is called an {\em asymptotic pair} for $f$ if
\[
\limsup_{i\to\infty}d(f^i(x),f^i(y))=0.
\]
\item $(x,y)$ is called a {\em scrambled pair} for $f$ if
\[
\limsup_{i\to\infty}d(f^i(x),f^i(y))>0\:\text{ and }\:\liminf_{i\to\infty}d(f^i(x),f^i(y))=0.
\]
\end{itemize}
\end{defi}

For a continuous map $f\colon X\to X$ and $x\in X$, the {\em $\omega$-limit set} $\omega(x,f)$ of $x$ for $f$ is defined to be the set of $y\in X$ such that $\lim_{j\to\infty}f^{i_j}(x)=y$ for some sequence $0\le i_1<i_2<\cdots$. Note that $\omega(x,f)$ is a closed $f$-invariant subset of $X$ and satisfies $y\rightarrow z$ for all $y,z\in\omega(x,f)$. For every $x\in X$, we have $\omega(x,f)\subset C$ for some $C\in\mathcal{C}(f)$ and such $C$ satisfies $C\subset C(x)$.

\begin{rem}
\normalfont
Let $f\colon X\to X$ be a continuous map.
\begin{itemize}
\item For a closed $f$-invariant subset $S$ of $X$ and $e>0$, we say that $x\in S$ is an {\em $e$-sensitive point} for $f|_S\colon S\to S$ if for any $\epsilon>0$, there is $y\in S$ such that $d(x,y)\le\epsilon$ and $d(f^i(x),f^i(y))>e$ for some $i\ge0$. We define $Sen_e(f|_S)$ to be the set of $e$-sensitive points for $f|_S$ and
\[
Sen(f|_S)=\bigcup_{e>0}Sen_e(f|_S).
\]    
\item A closed $f$-invariant subset $M$ of $X$ is said to be a {\em minimal set} for $f$ if closed $f$-invariant subsets of $M$ are only $\emptyset$ and $M$. This is equivalent to $M=\omega(x,f)$ for all $x\in M$.
\end{itemize}
\end{rem}
 
In \cite{K2}, the author gave three sufficient conditions for a shadowable point to be an entropy point. The next theorem refines Corollary 1.1 of \cite{K2}.

\begin{thm}
Let $f\colon X\to X$ be a continuous map. For any $x\in X$ and $C\in\mathcal{C}(f)$ with $\omega(x,f)\subset C$, if one of the following conditions is satisfied, then $C\in\mathcal{C}_{no}(f|_{C(x)})$.
\begin{itemize}
\item[(1)] $\omega(x,f)\cap Sen(f|_{CR(f)})\ne\emptyset$,
\item[(2)] there is $y\in X$ such that $(x,y)\in X^2$ is a scrambled pair for $f$,
\item[(3)] $\omega(x,f)$ is not a minimal set for $f$.
\end{itemize}
\end{thm}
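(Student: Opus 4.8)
The plan is to prove the equivalent assertion that if $C\in\mathcal{C}_o(f)$ then none of (1)--(3) can hold. First observe that $\omega(x,f)\subset C$ forces $C\subset C(x)$: the chain component $C'$ with $\omega(x,f)\subset C'\subset C(x)$ meets $C$, hence $C'=C$. Thus $C\in\mathcal{C}(f|_{C(x)})$, and since being a periodic orbit or an odometer is an intrinsic property of $(C,f|_C)$, the desired conclusion $C\in\mathcal{C}_{no}(f|_{C(x)})$ is equivalent to $C\notin\mathcal{C}_o(f)$. So assume henceforth that $C\in\mathcal{C}_o(f)$; I will use the following properties of such a $C$, to be established in Section~2.1: $C$ is a minimal set; $f|_C$ is distal and equicontinuous, whence $f\times f|_{C\times C}$ is equicontinuous and its chain components coincide with its minimal subsets; and $C\cap Sen(f|_{CR(f)})=\emptyset$.

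Conditions (3) and (1) then follow at once. Since $X$ is compact, $\omega(x,f)$ is a nonempty closed $f$-invariant subset of the minimal set $C$, so $\omega(x,f)=C$. This already contradicts (3), and it turns the hypothesis of (1) into $C\cap Sen(f|_{CR(f)})\ne\emptyset$, contradicting the last listed property.

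For (2), suppose $y$ is such that $(x,y)$ is a scrambled pair; as above $\omega(x,f)=C$. From $\liminf_{i\to\infty}d(f^i(x),f^i(y))=0$ choose $i_j\to\infty$ with $d(f^{i_j}(x),f^{i_j}(y))\to0$; after passing to a subsequence $f^{i_j}(x)\to a$ and then also $f^{i_j}(y)\to a$, so $a\in\omega(x,f)\cap\omega(y,f)$. Since $y'\rightarrow z'$ for all $y',z'\in\omega(y,f)$ and $a\in\omega(y,f)\cap C$, every point of $\omega(y,f)$ is $\leftrightarrow$-equivalent to $a$ and hence lies in $C$; thus $\omega(y,f)\subset C$. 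Now set $\Omega=\omega\big((x,y),f\times f\big)$. Projecting to the coordinates gives $\Omega\subset\omega(x,f)\times\omega(y,f)\subset C\times C$, and $\Omega$ is internally chain transitive (a standard property of $\omega$-limit sets), so $\Omega$ lies in a single chain component of $f\times f|_{C\times C}$. The diagonal $\Delta_C=\{(c,c):c\in C\}$, being a conjugate copy of the minimal system $(C,f)$, is a minimal subset, hence a chain component, of $f\times f|_{C\times C}$, and $(a,a)\in\Omega\cap\Delta_C$; therefore $\Omega\subset\Delta_C$. Consequently $\limsup_{i\to\infty}d(f^i(x),f^i(y))=\max\{d(u,v):(u,v)\in\Omega\}=0$, contradicting that $(x,y)$ is scrambled.

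The substantive point, and the main obstacle, is the input $C\cap Sen(f|_{CR(f)})=\emptyset$ for $C\in\mathcal{C}_o(f)$ — that is, $f|_{CR(f)}$ is equicontinuous at every point of a periodic-orbit or odometer chain component; this is precisely what distinguishes $\mathcal{C}_o(f)$ from the class of arbitrary minimal equicontinuous chain components (e.g.\ an irrational rotation would fail it). Its proof should use the rigidity of periodic orbits and odometers. After shrinking the sensitivity constant $e$ one may assume $e$ is small relative to the internal geometry of $C$; a point $w\in CR(f)$ close to $z\in C$ then yields, via the chain $(z,f(w),f^2(w),\dots)$ in $CR(f)$, a fine chain issuing from $C$, and chain stability of $C$ inside $CR(f)$ keeps the forward orbit of $w$ within $e/3$ of $C$; phase-locking (for a periodic orbit) or the ultrametric, projective-limit-of-cyclic-groups structure (for an odometer) then prevents that orbit from drifting out of phase with the orbit of $z$, forcing $d(f^i(z),f^i(w))\le e/3$ for all $i$ and contradicting $e$-sensitivity. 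I would set this up, along with the minimality, distality and equicontinuity of $f|_C$, as the lemmas of Section~2.1, after which the present theorem reduces to the short deductions above.
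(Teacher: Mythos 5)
Your proposal is logically sound but proceeds along a genuinely different route from the paper. The paper uses Corollary 2.1 to reduce the theorem to showing that each of (1)--(3) forces $f|_C$ to fail chain continuity, and then in each case it directly manufactures two $\delta$-chains in (or shadowing into) $C$ with a common start and separated ends; condition (1) is handled by pushing the sensitivity of $f|_{CR(f)}$ into $C$ via Lemma 3.3 (chain stability of $C$ inside $CR(f)$), and (2) by noting that both orbits converge to $C$ while remaining proximal but not asymptotic. You instead argue the contrapositive from structural properties of $\mathcal{C}_o$-components: minimality kills (3) and, via a product-system argument (internal chain transitivity of $\omega((x,y),f\times f)$ plus the fact that the diagonal is a whole chain component of $f\times f|_{C\times C}$), kills (2); this is more conceptual and arguably cleaner than the paper's ``these conditions clearly imply'' step for (2). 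What the paper's direct construction buys is that all three cases are settled by one uniform mechanism with no auxiliary structure theory for $\mathcal{C}_o$-components beyond Corollary 2.1 itself.

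Two caveats on the pieces you defer. First, your claim that equicontinuity of $f\times f|_{C\times C}$ alone makes its chain components coincide with its minimal subsets is false as stated (the identity on $[0,1]$ is a counterexample); you must also use that $C$, hence $C\times C$, is totally disconnected, after which Lemma 2.1/Corollary 2.1 applied to $f\times f|_{C\times C}$ gives exactly what you need. Second, the lemma $C\cap Sen(f|_{CR(f)})=\emptyset$ for $C\in\mathcal{C}_o(f)$, which carries all the content of case (1), is only sketched, and the ``phase-locking'' step is where the work lives. It does close: chain stability of $C$ in $CR(f)$ (the paper's Lemma 3.3) keeps the orbit of a nearby $w\in CR(f)$ within $\eta$ of $C$, so one can choose $c_i\in C$ with $d(f^i(w),c_i)\le\eta$, and then $(z,c_1,c_2,\dots)$ and $(f^i(z))_{i\ge0}$ are two fine pseudo-orbits of $f|_C$ with the same initial point; chain continuity of $f|_C$ (Corollary 2.1, the same equivalence the paper exploits) forces them to stay uniformly close, contradicting $e$-sensitivity at $z$. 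I would recommend formulating that lemma via chain continuity rather than via an ad hoc phase-locking argument for each of the two types of $\mathcal{C}_o$-components.
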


\begin{rem}
\normalfont
For a continuous map $f\colon X\to X$, $y\in X$ is called a {\em minimal point} for $f$ if $y\in\omega(y,f)$ and $\omega(y,f)$ is a minimal set for $f$. Due to Theorem 8.7 of \cite{F}, we know that for any $x\in X$, there is a minimal point $y\in X$ for $f$ such that $(x,y)$ is a proximal pair for $f$. If $\omega(x,f)$ is not a minimal set for $f$, then it follows that $(x,y)$ is a scrambled pair for $f$, thus $(3)$ always implies $(2)$.
\end{rem}

We consider another local property of dynamical systems so-called h-expansiveness \cite{B1}. Let $f\colon X\to X$ be a continuous map. For $x\in X$ and $\epsilon>0$, let
\[
\Phi_{\epsilon}(x)=\{y\in X\colon d(f^i(x),f^i(y))\le\epsilon\:\:\text{for all $i\ge0$}\}
\]
and
\[
h_f^\ast(\epsilon)=\sup_{x\in X}h(f,\Phi_{\epsilon}(x)).
\]
We say that $f$ is {\em h-expansive} if $h_f^\ast(\epsilon)=0$ for some $\epsilon>0$. The following theorem gives several conditions equivalent to (non) h-expansiveness under the assumption of shadowing and chain transitivity.
  
\begin{thm}
Let $f\colon X\to X$ be a continuous map. If $f$ is chain transitive and has the shadowing property, then the following conditions are equivalent
\begin{itemize}
\item[(1)] $f$ is not h-expansive,
\item[(2)] for any $\epsilon>0$, there is $r>0$ such that for every $\delta>0$, there is a pair
\[
((x_i)_{i=0}^k,(y_i)_{i=0}^k)
\]
of $\delta$-chains of $f$ with $(x_0,x_k)=(y_0,y_k)$ and
\[
r\le\max_{0\le i\le k}d(x_i,y_i)\le\epsilon,
\]
\item[(3)] for any $\epsilon>0$, there are $m\ge1$ and a closed $f^m$-invariant subset $Y$ of $X$ such that
\[
\sup_{i\ge0}d(f^i(x),f^i(y))\le\epsilon
\]
for all $x,y\in Y$ and there is a factor map
\[
\pi\colon(Y,f^m)\to(\{0,1\}^\mathbb{N},\sigma),
\]
where $\sigma\colon\{0,1\}^\mathbb{N}\to\{0,1\}^\mathbb{N}$ is the shift map.
\item[(4)] for any $\epsilon>0$,  there is a scrambled pair $(x,y)\in X^2$ for $f$ such that
\[
\sup_{i\ge0}d(f^i(x),f^i(y))\le\epsilon.
\]
\end{itemize}
\end{thm}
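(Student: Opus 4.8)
The plan is to prove the five implications $(1)\Rightarrow(2)$, $(2)\Rightarrow(3)$, $(3)\Rightarrow(1)$, $(3)\Rightarrow(4)$ and $(4)\Rightarrow(2)$: the first three give $(1)\Leftrightarrow(2)\Leftrightarrow(3)$, and since $(3)\Rightarrow(4)\Rightarrow(2)$ the last condition joins the equivalence as well. Only the step $(2)\Rightarrow(3)$ will use chain transitivity and shadowing; the other four hold for an arbitrary continuous map. For $(1)\Rightarrow(2)$, fix $\epsilon>0$. Since $f$ is not h-expansive, $h_f^\ast(\epsilon/2)>0$, so some $x_0\in X$ has $h(f,\Phi_{\epsilon/2}(x_0))>0$, hence there is $r>0$ with $c:=h(f,\Phi_{\epsilon/2}(x_0),r)>0$, and thus for infinitely many $N$ there is an $(N,r)$-separated set $E_N\subset\Phi_{\epsilon/2}(x_0)$ with $|E_N|\ge e^{cN/2}$. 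I claim this $r$ works in $(2)$. Given $\delta>0$ (we may assume $\delta<r$), fix a finite Borel partition $\mathcal{P}$ of $X$ into sets of diameter $<\delta$ and take $N$ so large that $|E_N|>|\mathcal{P}|^{3}$. By pigeonhole there are distinct $z,z'\in E_N$ lying in a common piece of $\mathcal{P}$ at each of the times $0$, $1$ and $N$, so $d(f^iz,f^iz')<\delta$ for $i\in\{0,1,N\}$; and $d_N(z,z')>r$ forces $d(f^{t_0}z,f^{t_0}z')>r$ for some $t_0\in\{2,\dots,N-1\}$ (it cannot be $0$ or $1$ since $\delta<r$). Then
\[
(z,fz,f^2z,\dots,f^Nz)\qquad\text{and}\qquad(z,fz',f^2z',\dots,f^{N-1}z',f^Nz)
\]
are $\delta$-chains of $f$ from $z$ to $f^Nz$ (the two ``switching'' steps are allowed because $d(fz,fz')<\delta$ and $d(f^Nz',f^Nz)<\delta$), all of their coordinatewise distances lie in $[0,\epsilon]$ because $z,z'\in\Phi_{\epsilon/2}(x_0)$, and they exceed $r$ at index $t_0$.

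For $(2)\Rightarrow(3)$ — the core of the argument — fix $\epsilon>0$, apply $(2)$ with $\epsilon/3$ to obtain $r>0$, set $\epsilon'=\tfrac12\min\{\epsilon/3,r/2\}$, and let $\delta_0>0$ be a shadowing gauge for $\epsilon'$. Applying $(2)$ with $\delta=\delta_0$ gives $\delta_0$-chains $u=(u_i)_{i=0}^{k}$ and $v=(v_i)_{i=0}^{k}$ with $u_0=v_0=:a$, $u_k=v_k=:b$, $\max_i d(u_i,v_i)\le\epsilon/3$, and $d(u_{t_0},v_{t_0})\ge r$ for some $0<t_0<k$; by chain transitivity choose a $\delta_0$-chain $w=(w_i)_{i=0}^{k'}$ from $b$ to $a$ and put $m=k+k'$. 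For $\omega\in\{0,1\}^{\mathbb{N}}$ let $\xi_\omega$ be the $\delta_0$-pseudo orbit of $f$ whose $j$th length-$m$ block runs through $u$ if $\omega_j=0$ and through $v$ if $\omega_j=1$, then through $w$; note that the $m$-shift of $\xi_\omega$ is $\xi_{\sigma\omega}$ and that the $(jm+t_0)$th term of $\xi_\omega$ is $u_{t_0}$ or $v_{t_0}$ according as $\omega_j=0$ or $1$. Pick $p_\omega$ that $\epsilon'$-shadows $\xi_\omega$ and set $Y=\overline{\{f^{jm}(p_\omega):\omega\in\{0,1\}^{\mathbb{N}},\ j\ge0\}}$. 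Every point of $Y$ still $\epsilon'$-shadows some $\xi_{\omega'}$ (true on the generating set, and preserved under limits by compactness of $\{0,1\}^{\mathbb{N}}$); since $u,v$ are $(\epsilon/3)$-close coordinatewise and $w$ is common to all $\xi$'s, this yields $\sup_{i\ge0}d(f^i(x),f^i(y))\le\epsilon/3+2\epsilon'\le\epsilon$ for all $x,y\in Y$, and $Y$ is closed with $f^m(Y)\subset Y$. Finally define $\pi\colon Y\to\{0,1\}^{\mathbb{N}}$ by letting $\pi(q)_j$ be $0$ or $1$ according as $f^{jm+t_0}(q)$ is closer to $u_{t_0}$ or to $v_{t_0}$; since $d(u_{t_0},v_{t_0})\ge r>2\epsilon'$, a point $\epsilon'$-shadowing $\xi_{\omega'}$ has $\pi(q)=\omega'$, so $\pi$ is well defined, continuous, surjective and satisfies $\pi\circ f^m=\sigma\circ\pi$.

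For $(3)\Rightarrow(1)$: given $\epsilon>0$, take $(m,Y,\pi)$ as in $(3)$; factor maps do not raise entropy, so $h_{\mathrm{top}}(f^m|_Y)\ge h_{\mathrm{top}}(\sigma)=\log2$, and since an $(n,r)$-separated set for $f^m$ is $(mn,r)$-separated for $f$ we get $h(f,Y)\ge\tfrac1m h_{\mathrm{top}}(f^m|_Y)>0$; as $Y\subset\Phi_\epsilon(x_0)$ for every $x_0\in Y$, this gives $h_f^\ast(\epsilon)\ge h(f,\Phi_\epsilon(x_0))\ge h(f,Y)>0$, and $\epsilon>0$ being arbitrary, $f$ is not h-expansive. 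For $(3)\Rightarrow(4)$: the system $(Y,f^m)$ has positive topological entropy, so by the Blanchard--Glasner--Kolyada--Maass theorem it possesses a scrambled pair $(x,y)$ for $f^m|_Y$; this pair is scrambled for $f$ too, the $f^m$-orbit distances forming a subsequence of the $f$-orbit distances, and $x,y\in Y$ yields $\sup_{i\ge0}d(f^i(x),f^i(y))\le\epsilon$. For $(4)\Rightarrow(2)$: given $\epsilon>0$, take a scrambled pair $(x,y)$ with $\sup_{i\ge0}d(f^i(x),f^i(y))\le\epsilon$ and set $r=\tfrac12\limsup_{i\to\infty}d(f^i(x),f^i(y))>0$; for $\delta>0$, using uniform continuity of $f$ pick a proximal time $n_1$ so close that $d(f^{n_1+1}(x),f^{n_1+1}(y))\le\delta$, then $n>n_1$ with $d(f^n(x),f^n(y))>2r$, then a proximal time $n_2>n$ with $d(f^{n_2}(x),f^{n_2}(y))\le\delta$, and note that
\[
(f^{n_1}x,f^{n_1+1}x,\dots,f^{n_2}x)\qquad\text{and}\qquad(f^{n_1}x,f^{n_1+1}y,\dots,f^{n_2-1}y,f^{n_2}x)
\]
are $\delta$-chains of $f$ with common endpoints whose maximal coordinatewise distance lies in $[r,\epsilon]$.

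The main obstacle is $(2)\Rightarrow(3)$. It requires a coherent choice of the several constants — the crucial inequality being $\epsilon'<r/2$, which is exactly what lets the separating coordinate $t_0$ survive the $\epsilon'$-shadowing error — together with the simultaneous verification that $Y$ is a closed $f^m$-invariant set enjoying the uniform orbit-closeness and that $\pi$ is well defined, continuous, surjective and equivariant. The double pigeonhole in $(1)\Rightarrow(2)$ (making the two itineraries agree at both ends of the time window so that the two orbit arcs close into a matched pair of chains) is also a little delicate, though conceptually routine; and $(3)\Rightarrow(4)$ rests on the standard but nontrivial fact that positive topological entropy forces a Li--Yorke pair.
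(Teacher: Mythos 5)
Your proposal is correct and follows essentially the same route as the paper: the same cycle $(1)\Rightarrow(2)\Rightarrow(3)\Rightarrow(1)$ via a pigeonhole argument on $(n,r)$-separated subsets of $\Phi_{\epsilon/2}(x_0)$, the same shadowing-plus-chain-transitivity construction of a closed $f^m$-invariant set factoring onto the full shift, and the same appeal to Blanchard--Glasner--Kolyada--Maass for $(3)\Rightarrow(4)$, with $(4)\Rightarrow(2)$ closing the loop. The only blemish is in $(4)\Rightarrow(2)$ (which the paper treats as immediate): with $r=\tfrac12\limsup_{i\to\infty}d(f^i(x),f^i(y))$ you cannot guarantee times with $d(f^n(x),f^n(y))>2r$, only $>r$, but that is all your pair of chains actually needs.
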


In Section 4, we use this theorem to obtain a counter-example for a question in \cite{ACCV2}. We shall make some definitions to precisely state the properties that are satisfied by the example.

\begin{defi}
\normalfont
Let $f\colon X\to X$ be a continuous map and let $\xi=(x_i)_{i\ge0}$ be a sequence of points in $X$. For $\delta>0$, $\xi$ is called a {\em $\delta$-limit-pseudo orbit} of $f$ if $d(f(x_i),x_{i+1})\le\delta$ for all $i\ge0$, and
\[
\lim_{i\to\infty}d(f(x_i),x_{i+1})=0.
\]
For $\epsilon>0$, $\xi$ is said to be {\em $\epsilon$-limit shadowed} by $x\in X$ if $d(f^i(x),x_i)\leq \epsilon$ for all $i\ge 0$, and
\[
\lim_{i\to\infty}d(f^i(x),x_i)=0.
\]
We say that $f$ has the {\em s-limit shadowing property} if for any $\epsilon>0$, there is $\delta>0$ such that every $\delta$-limit-pseudo orbit of $f$ is $\epsilon$-limit shadowed by some point of $X$.
\end{defi}

\begin{rem}
\normalfont
If $f$ has the s-limit shadowing property, then $f$ satisfies the shadowing property.  
\end{rem}

\begin{defi}
\normalfont
Let $f\colon X\to X$ be a homeomorphism. For $x\in X$ and $\epsilon>0$, let
\[
\Gamma_\epsilon(x)=\{y\in X\colon d(f^i(x),f^i(y))\le\epsilon\:\:\text{for all $i\in\mathbb{Z}$}\}.
\]
We say that $f$ is 
\begin{itemize}
\item {\em expansive} if there is $e>0$ such that $\Gamma_e(x)=\{x\}$ for all $x\in X$,
\item {\em countably-expansive} if there is $e>0$ such that $\Gamma_e(x)$ is a countable set for all $x\in X$,
\item {\em cw-expansive} if there is $e>0$ such that $\Gamma_e(x)$ is totally disconnected for all $x\in X$.
\end{itemize}
\end{defi}

A continuous map $f\colon X\to X$ is said to be {\em transitive} (resp.\:{\em mixing}) if for any non-empty open subsets $U,V$ of $X$, it holds that $f^j(U)\cap V\ne\emptyset$ for some $j>0$ (resp.\:for all $j\ge i$ for some $i>0$).

\begin{rem}
\normalfont
If $f$ is transitive, then $f$ is chain transitive, and the converse holds when $f$ has the shadowing property.
\end{rem}

In Section 4, by using Theorem 1.4, we give an example of a homeomorphism $f\colon X\to X$ (Example 4.1) such that
\begin{itemize}
\item[(1)] $X$ is totally disconnected and so $f$ is cw-expansive,
\item[(2)] $f$ is mixing,
\item[(3)] $f$ is h-expansive,
\item[(4)] $f$ has the s-limit shadowing property,
\item[(5)] $f$ is not countably-expansive,
\item[(6)] $f$ satisfies $X_e=\emptyset$, where
\[
X_e=\{x\in X\colon\Gamma_\epsilon(x)=\{x\}\:\text{ for some $\epsilon>0$}\}.
\]
\end{itemize}

In \cite{ACCV1}, it is proved that if a homeomorphism $f\colon X\to X$ has the {\em L-shadowing property}, that is, a kind of two-sided s-limit shadowing property, then
\[
f|_{CR(f)}\colon CR(f)\to CR(f)
\]
is expansive if and only if $f|_{CR(f)}$ is countably-expansive if and only if $f|_{CR(f)}$ is h-expansive (see Corollary C of \cite{ACCV1}). Example 4.1 shows that even if a homeomorphism $f\colon X\to X$ satisfies the s-limit shadowing property, this equivalence does not hold. The example also gives a negative answer to the following question in \cite{ACCV2} (see Question 3 of \cite{ACCV2}):

\begin{ques}
Is $X_e$ non-empty for every transitive h-expansive and cw-expansive homeomorphism $f\colon X\to X$ satisfying the shadowing property?
\end{ques}

This paper consists of four sections. In Section 2, we collect some definitions, notations, and facts that are used in this paper. In Section 3, we prove Lemmas 1.1 and 1.2; prove Theorems 1.1, 1.2, and 1.3; and give an example mentioned in Remark 1.5. In Section 4, we prove Theorem 1.4 and give an example mentioned above (Example 4.1) after proving some auxiliary lemmas.

\section{Preliminaries}

In this section, we briefly collect some definitions, notations, and facts that are used in this paper. 

\subsection{{\it Odometers, equicontinuity, and chain continuity}}

An {\em odometer} (also called an {\em adding machine}) is defined as follows. Given an increasing sequence $m=(m_k)_{k\ge1}$ of positive integers such that $m_1\ge1$ and $m_k$ divides $m_{k+1}$ for each $k=1,2,\dots$, we define
\begin{itemize}
\item $X(k)=\{0,1,\dots,m_k-1\}$ (with the discrete topology),
\item
\[
X_m=\{(x_k)_{k\ge1}\in\prod_{k\ge1}X(k)\colon x_k\equiv x_{k+1}\pmod{m_k}\:\text{ for all $k\ge1$}\},
\]
\item $g_m(x)_k=x_k+1\pmod{m_k}$ for all $x=(x_k)_{k\ge1}\in X_m$ and $k\ge1$.
\end{itemize}
We regard $X_m$ as a subspace of the product space $\prod_{k\ge1}X(k)$. The homeomorphism
\[
g_m\colon X_m\to X_m
\]
(or $(X_m,g_m)$) is called an odometer with the periodic structure $m$.

Let $f\colon X\to X$ be a continuous map and let $S$ be a closed $f$-invariant subset of $X$. We say that $f|_S\colon S\to S$ is
\begin{itemize}
\item {\em equicontinuous} if for every $\epsilon>0$, there is $\delta>0$ such that any $x,y\in S$ with $d(x,y)\le\delta$ satisfies
\[
\sup_{i\ge0}d(f^i(x),f^i(y))\le\epsilon,
\]
\item {\em chain continuous} if for every $\epsilon>0$, there is $\delta>0$ such that any $\delta$-pseudo orbits $(x_i)_{i\ge0}$ and $(y_i)_{i\ge0}$ of $f$ with $x_0=y_0$ satisfies
\[
\sup_{i\ge0}d(x_i,y_i)\le\epsilon.
\]
\end{itemize}

Recall that for a continuous map $f\colon X\to X$, $\mathcal{C}_o(f)$ is defined as the set of $C\in\mathcal{C}(f)$ such that $C$ is a periodic orbit or an odometer, that is, $(C,f|_C)$ is topologically conjugate to an odometer.

\begin{lem}
For a continuous map $f\colon X\to X$, the following conditions are equivalent
\begin{itemize}
\item[(1)] $\mathcal{C}(f)=\mathcal{C}_o(f)$,
\item[(2)] $f|_{CR(f)}\colon CR(f)\to CR(f)$ is an equicontinuous homeomorphism and $CR(f)$ is totally disconnected,
\item[(3)] $f|_{CR(f)}\colon CR(f)\to CR(f)$ is chain continuous.
\end{itemize}
\end{lem}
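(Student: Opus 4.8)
The plan is to establish the cycle of implications $(1)\Rightarrow(2)\Rightarrow(3)\Rightarrow(1)$, with $(2)\Rightarrow(3)$ being essentially immediate and the other two implications carrying the real content. Throughout I will work on the compact metric space $CR(f)$, noting that $f|_{CR(f)}$ is a homeomorphism whenever one of these conditions holds (this is part of what $(2)$ asserts and is easy to recover under $(1)$ and $(3)$).

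For $(1)\Rightarrow(2)$: assuming $\mathcal{C}(f)=\mathcal{C}_o(f)$, every chain component $C$ is either a periodic orbit or an odometer, and in either case $(C,f|_C)$ is an equicontinuous homeomorphism on a totally disconnected set. Since $CR(f)=\bigsqcup_{C\in\mathcal{C}(f)}C$ by the first Remark, $CR(f)$ is totally disconnected provided the decomposition into chain components is ``topologically nice'' — here I would invoke compactness of $CR(f)$ together with the fact that $\leftrightarrow$ is a closed equivalence relation, so the quotient $CR(f)/\!\leftrightarrow$ is a compact Hausdorff space; the first step is to show this quotient is totally disconnected (each component maps into a single point of the quotient, since a connected set meeting two distinct $C$'s is impossible when each $C$ is totally disconnected, but more care is needed — one shows a connected subset of $CR(f)$ must lie in a single $C$ using that the $C$'s are separated by the chain relation). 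The harder part is the uniform (equi)continuity claim: each $f|_C$ is equicontinuous with its own modulus, and I must produce a modulus that works simultaneously for all components. This is where I expect the main obstacle to lie. The idea is a compactness/contradiction argument: if $f|_{CR(f)}$ were not equicontinuous, there would be $\epsilon>0$, points $x_n,y_n\in CR(f)$ with $d(x_n,y_n)\to0$ but $d(f^{i_n}(x_n),f^{i_n}(y_n))>\epsilon$; passing to subsequences, $x_n,y_n\to z$ for a common limit $z$ lying in some component $C=C(z)$, and one derives from $d(x_n,y_n)\to0$ that the pseudo-orbits/chains through $x_n$ and $y_n$ stay close, contradicting equicontinuity of $f|_C$ — but to make this rigorous one passes through the chain-continuity formulation, which is exactly condition $(3)$, so it is cleaner to route $(1)\Rightarrow(2)$ through first proving $(1)\Rightarrow(3)$ and then $(3)\Rightarrow(2)$. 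I would reorganize accordingly.

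So the preferred order is $(1)\Rightarrow(3)\Rightarrow(2)\Rightarrow(3)\Rightarrow(1)$, i.e. really $(1)\Leftrightarrow(3)$ and $(3)\Rightarrow(2)\Rightarrow(3)$. For $(3)\Rightarrow(1)$: if $f|_{CR(f)}$ is chain continuous and $C\in\mathcal{C}(f)$, then $f|_C$ is chain continuous, hence (since for points of a chain-transitive set, true orbits are uniform limits of returning $\delta$-chains) equicontinuous; an equicontinuous chain-transitive homeomorphism on a compact metric space with a dense orbit of chain recurrent points is, by a standard structure theorem, topologically conjugate to a minimal equicontinuous system, which — being also chain transitive and hence having every orbit equidistributing — is either a periodic orbit (if $C$ is finite) or, if infinite and totally disconnected, an odometer. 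The step needing attention is deducing total disconnectedness of each infinite $C$: from chain continuity one shows $f|_C$ has the shadowing property on $C$ is \emph{not} what is needed; rather, equicontinuity plus the fact that nearby points generate nearby chains forces $C$ to be a Cantor-type set or finite. Here I would cite the standard characterization of equicontinuous minimal systems (e.g. that an infinite equicontinuous minimal system with totally disconnected phase space is an odometer) and reduce to verifying total disconnectedness, which follows because chain continuity gives, for each $\epsilon$, a $\delta$ such that the $\delta$-ball around any $x\in C$ lies inside $\Phi_\epsilon(x)\cap\Phi_\epsilon^{-}(x)$-type sets that shrink, separating points into clopen pieces.

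For $(1)\Rightarrow(3)$: given $\mathcal{C}(f)=\mathcal{C}_o(f)$ and $\epsilon>0$, I must find $\delta>0$ so that $\delta$-pseudo-orbits in $CR(f)$ with equal initial point stay $\epsilon$-close. Since each $C$ is a periodic orbit or odometer, $f|_C$ is chain continuous with some modulus $\delta_C$; by compactness of $\mathcal{C}(f)$ in the appropriate (Vietoris/Hausdorff) sense and upper semicontinuity of the assignment $C\mapsto$ (its chain-continuity modulus), together with the fact that distinct components are at positive mutual distance along chains (a $\delta$-chain starting in $C$ stays near $C$ for small $\delta$, since $C=C(x)$ for $x\in C$ is chain stable by Lemma 1.1 — wait, $C(x)$ may be larger than $C$; instead use that $C$ itself, being a chain component, is chain stable as an isolated piece of $CR(f)$ in the chain sense), one extracts a uniform $\delta$. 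The genuine difficulty, and the main obstacle of the whole lemma, is precisely this uniformity across infinitely many odometer components of unboundedly growing ``complexity'': I expect to handle it by a contradiction/compactness argument as sketched above, taking a putative bad sequence of pseudo-orbit pairs, extracting a convergent subsequence of initial points with limit $z$, observing the limiting pair of pseudo-orbits lives in $C(z)$ whose components all lie in $\mathcal{C}_o$, and contradicting chain continuity of $f|_{C(z)}$ — which itself follows from equicontinuity of $f|_C$ for the finitely-many-up-to-the-relevant-scale components inside $C(z)$. Finally $(2)\Rightarrow(3)$: an equicontinuous homeomorphism on a totally disconnected compact metric space is chain continuous — one refines the equicontinuity modulus using that small clopen neighborhoods are $f$-almost-invariant at the relevant scale, so $\delta$-errors with $\delta$ smaller than the ``clopen resolution'' cannot accumulate; this is the most routine of the implications.
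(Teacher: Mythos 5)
Your overall architecture (reduce everything to statements about individual chain components, then glue) is not the paper's, and the gluing step is where your argument breaks down. The crux of the lemma is precisely the passage from componentwise regularity to a \emph{uniform} modulus on all of $CR(f)$: knowing that each $f|_C$ is equicontinuous (or chain continuous) with its own modulus $\delta_C$ gives nothing when $\mathcal{C}(f)$ is infinite, and the compactness/contradiction argument you sketch for $(1)\Rightarrow(3)$ is circular --- at its final step you invoke ``chain continuity of $f|_{C(z)}$, which itself follows from equicontinuity of $f|_C$ for the finitely-many-up-to-the-relevant-scale components inside $C(z)$,'' but $C(z)$ may contain infinitely many chain components at every scale, so you are assuming an instance of exactly the uniformity you set out to prove. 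There is also an unaddressed analytic issue in the same sketch: a bad sequence of $\delta_n$-pseudo-orbit pairs with $\delta_n\to0$ has limits (after diagonal extraction) that are genuine orbits, while the separation times $i_n$ may tend to infinity, so the limiting pair need not witness any failure of anything; the contradiction does not materialize without further input.

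The paper closes this gap by a completely different, non-elementary ingredient. For $(1)\Rightarrow(2)$ it observes that $\mathcal{C}(f)=\mathcal{C}_o(f)$ makes $f|_{CR(f)}$ a \emph{distal} homeomorphism of a totally disconnected compact space (distality within a component because periodic orbits and odometers are isometric systems; across components because distinct components are closed, invariant, and disjoint), and then quotes Corollary 1.9 of \cite{AGW}: a distal homeomorphism of a zero-dimensional compact system of this kind is equicontinuous. That theorem is what supplies the uniform modulus, and nothing in your proposal substitutes for it. The implications $(2)\Rightarrow(3)$ and $(3)\Rightarrow(1)$ are in turn cited from \cite{K3}; your sketches of these ($(3)\Rightarrow(1)$ via the classification of minimal equicontinuous systems on totally disconnected spaces, $(2)\Rightarrow(3)$ via clopen almost-invariance) are plausible in outline but are likewise only outlines. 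If you want a self-contained proof you must either prove the distal-implies-equicontinuous statement for zero-dimensional systems or find a genuinely non-circular replacement for the uniformity step; as written, the proposal does not contain one.
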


\begin{proof}
We prove the implication $(1)\implies (2)$. Since $\mathcal{C}(f)=\mathcal{C}_o(f)$,
\begin{itemize}
\item[(A)] every $C\in\mathcal{C}(f)$ is totally disconnected,
\item[(B)] $f|_{CR(f)}$ is a distal homeomorphism, that is, every $(x,y)\in CR(f)^2$ is a distal pair for $f|_{CR(f)}$.
\end{itemize}
Since the quotient space
\[
\mathcal{C}(f)=CR(f)/{\leftrightarrow}
\]
is totally disconnected, by (A), we obtain that $CR(f)$ is totally disconnected. By (B)
and Corollary 1.9 of \cite{AGW}, we conclude that $f|_{CR(f)}$ is an equicontinuous homeomorphism. For a proof of  $(2)\implies(3)$ (resp.\:$(3)\implies(1)$), we refer to Lemma 3.3 (resp.\:Section 6) of \cite{K3}.
\end{proof}

By applying Lemma 2.1 to $f|_C\colon C\to C$, $C\in\mathcal{C}(f)$, we obtain the following corollary.

\begin{cor}
For a continuous map $f\colon X\to X$ and $C\in\mathcal{C}(f)$, the following conditions are equivalent
\begin{itemize}
\item[(1)] $C\in\mathcal{C}_o(f)$,
\item[(2)] $f|_C\colon C\to C$ is an equicontinuous homeomorphism and $C$ is totally disconnected,
\item[(3)] $f|_C\colon C\to C$ is chain continuous.
\end{itemize}
\end{cor}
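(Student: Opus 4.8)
The plan is to deduce the corollary directly from Lemma 2.1 applied to the subsystem $(C,f|_C)$, taken as a dynamical system in its own right (i.e.\ with $C$ in the role of the ambient compact metric space and $f|_C$ in the role of the map). First I would record the elementary facts that legitimize this: since $C\in\mathcal{C}(f)$, it is a closed $f$-invariant subset of $X$, so $(C,d)$ is a compact metric space and $f|_C\colon C\to C$ is a well-defined continuous map; moreover $f|_C$ is chain transitive (Remark after the definition of chain transitivity). From chain transitivity of $f|_C$ one immediately gets $CR(f|_C)=C$: taking $y=x$ in the definition of chain transitivity yields, for every $\delta>0$, a $\delta$-cycle of $f|_C$ based at $x$, so every $x\in C$ is chain recurrent for $f|_C$, and the reverse inclusion is trivial. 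Since all points of $C$ are mutually chain equivalent for $f|_C$, this also gives $\mathcal{C}(f|_C)=\{C\}$.

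Next I would translate the three conditions of Lemma 2.1, read for $f|_C$, using these identifications. Because $\mathcal{C}(f|_C)=\{C\}$, condition (1) of Lemma 2.1, namely $\mathcal{C}(f|_C)=\mathcal{C}_o(f|_C)$, says exactly that $C\in\mathcal{C}_o(f|_C)$, i.e.\ $(C,f|_C)$ is a periodic orbit or topologically conjugate to an odometer; and this is precisely the statement $C\in\mathcal{C}_o(f)$, since whether $C$ is a periodic orbit or an odometer depends only on the intrinsic dynamics $f|_C$. Using $CR(f|_C)=C$, condition (2) of Lemma 2.1 becomes ``$f|_C\colon C\to C$ is an equicontinuous homeomorphism and $C$ is totally disconnected'', which is condition (2) here; and condition (3) of Lemma 2.1 becomes ``$f|_C\colon C\to C$ is chain continuous'' --- note that $\delta$-pseudo orbits of $(f|_C)|_{CR(f|_C)}=f|_C$ are just $\delta$-pseudo orbits of $f|_C$, i.e.\ sequences lying in $C$ --- which is condition (3) here. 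Hence the equivalence $(1)\Leftrightarrow(2)\Leftrightarrow(3)$ follows from Lemma 2.1.

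I do not expect a genuine obstacle; the only point requiring care is the bookkeeping about what is intrinsic to the subsystem: that ``$C$ is a periodic orbit or an odometer'' refers to the map $f|_C$ in both $\mathcal{C}_o(f)$ and $\mathcal{C}_o(f|_C)$; that equicontinuity and chain continuity of $f|_C$ are taken with respect to $d$ restricted to $C$ and to pseudo-orbits contained in $C$, which is exactly how Lemma 2.1 reads when its ambient space is $C$; and that total disconnectedness of $C$ is a property of the subspace topology, again matching. Once these identifications are made explicit, the corollary is an immediate specialization of Lemma 2.1.
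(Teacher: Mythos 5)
Your proposal is correct and is exactly the paper's route: the paper derives the corollary in one line by applying Lemma 2.1 to the subsystem $f|_C\colon C\to C$, and the identifications you spell out ($CR(f|_C)=C$ and $\mathcal{C}(f|_C)=\{C\}$ from chain transitivity of $f|_C$) are precisely the bookkeeping that makes that specialization legitimate.
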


\subsection{{\it Factor maps and inverse limit}}

For two continuous maps $f\colon X\to X$, $g\colon Y\to Y$, where $X$, $Y$ are compact metric spaces, a continuous map $\pi\colon X\to Y$ is said to be a {\em factor map} if $\pi$ is surjective and satisfies $\pi\circ f=g\circ\pi$. A factor map $\pi\colon X\to Y$ is also denoted as 
\[
\pi\colon(X,f)\to(Y,g).
\]

Given an inverse sequence of factor maps
\[
\pi=(\pi_n\colon(X_{n+1},f_{n+1})\to(X_n,f_n))_{n\ge1},
\]
let
\[
X=\{x=(x_n)_{n\ge1}\in\prod_{n\ge1}X_n\colon\pi_n(x_{n+1})=x_n\:\text{ for all $n\ge1$}\},
\]
which is a compact metric space. Then, a continuous map $f\colon X\to X$ is well-defined by $f(x)=(f_n(x_n))_{n\ge1}$ for all $x=(x_n)_{n\ge1}\in X$.
We call
\[
(X,f)=\lim_\pi(X_n,f_n)
\]
the {\em inverse limit system}. It is easy to see that $f$ is transitive (resp.\:mixing) if and only if  $f_n\colon X_n\to X_n$ is transitive (resp.\:mixing) for all $n\ge1$.  It is also easy to see that $f$ has the shadowing property if $f_n\colon X_n\to X_n$ has the shadowing property for all $n\ge1$.
 
\section{Proofs of Theorems 1.1, 1.2, and 1.3} 

In this section, we prove Lemmas 1.1 and 1.2; prove Theorems 1.1, 1.2, and 1.3; and give an example mentioned in Remark 1.5.

First, we prove Lemma 1.1.

\begin{proof}[Proof of Lemma 1.1]
If $C(x)$ is not chain stable, then there is $r>0$ such that for any $\delta>0$, there is a $\delta$-chain $x^{(\delta)}=(x_i^{(\delta)})_{i=0}^{k_\delta}$ of $f$ with $x_0^{(\delta)}\in C(x)$ and
\[
d(x_{k_\delta}^{(\delta)}, C(x))\ge r.
\]
Then, there are a sequence $0<\delta_1>\delta_2>\cdots$ and $y,z\in X$ such that the following conditions are satisfied
\begin{itemize}
\item $\lim_{j\to\infty}\delta_j=0$,
\item $\lim_{j\to\infty}x_0^{(\delta_j)}=y$ and $\lim_{j\to\infty}x_{k_{\delta_j}}^{(\delta_j)}=z$.
\end{itemize}
It follows that $y\in C(x)$, $d(z,C(x))\ge r>0$ and so $z\not\in C(x)$; and $y\rightarrow z$. However, if $y=x$, we obtain $x\rightarrow z$ implying $z\in C(x)$, a contradiction. If $y\ne x$, by $x\rightarrow y$ and $y\rightarrow z$, we obtain $x\rightarrow z$ implying $z\in C(x)$, a contradiction. Thus, the lemma has been proved.
\end{proof}

Next, we prove Lemma 1.2.

\begin{proof}[Proof of Lemma 1.2]
We prove the implication $(1)\implies(2)$. Let $x\in Sh(f)$ and $y\in C(x)\setminus\{x\}$. For any $\epsilon>0$, since $x\in Sh(f)$, there is $\delta>0$ such that every $\delta$-pseudo orbit $(x_i)_{i\ge0}$ of $f$ with $x_0=x$ is $\epsilon$-shadowed by some $z\in X$. Since $y\in C(x)\setminus\{x\}$ and so $x\rightarrow y$, we have a $\delta$-chain $\alpha=(y_i)_{i=0}^k$ of $f$ with $y_0=x$ and $y_k=y$. For any $\delta$-pseudo orbit $\beta=(z_i)_{i\ge0}$ of $f$ with $z_0=y$, we consider a $\delta$-pseudo orbit
\[
\xi=\alpha\beta=(x_i)_{i\ge0}=(y_0,y_1,\dots,y_{k-1},z_0,z_1,z_2,\dots)
\]
of $f$. Then, since $x_0=y_0=x$, $\xi$ is $\epsilon$-shadowed by some $z\in X$ and so $\beta$ is $\epsilon$-shadowed by $f^k(z)$. Since $\epsilon>0$ is arbitrary, we obtain $y\in Sh(f)$, thus $(1)\implies(2)$ has been proved.

Next, we prove the implication $(2)\implies(3)$. For a closed subset $K$ of $X$, if $K\subset Sh(f)$, then by Lemma 2.4 of \cite{K1}, for any $\epsilon>0$, there is $\delta>0$ such that every $\delta$-pseudo orbit $(x_i)_{i\ge0}$ of $f$ with $x_0\in K$ is $\epsilon$-shadowed by some $y\in X$. Since $C(x)$ is a closed subset of $X$, this clearly implies that if $C(x)\subset Sh(f)$, then $f$ has the shadowing on $C(x)$.

Finally, we  prove the implication $(3)\implies(1)$. If $f$ has the shadowing on $C(x)$, then for any $\epsilon>0$, there is $\delta>0$ such that every $\delta$-pseudo orbit $(x_i)_{i\ge0}$ of $f$ with $x_i\in C(x)$ for all $i\ge0$ is $\epsilon$-shadowed by some $y\in X$. Since $x\in C(x)$ and $C(x)$ is chain stable, if $\gamma>0$ is sufficiently small, then for every $\gamma$-pseudo orbit $\xi=(y_i)_{i\ge0}$ of $f$ with $y_0=x$, by taking $x_i\in C(x)$, $i>0$, with $d(y_i,C(x))=d(y_i,x_i)$ for all $i>0$, we have that
\begin{itemize}
\item $d(x_i,y_i)\le\epsilon$ for each $i>0$,
\item
\[
(x_i)_{i\ge0}=(x,x_1,x_2,x_3,\dots)
\]
is a $\delta$-pseudo orbit of $f$ with $x_i\in C(x)$ for all $i\ge0$ and so is $\epsilon$-shadowed by some $y\in X$.
\end{itemize}
It follows that $\xi$ is $2\epsilon$-shadowed by $y$. Since $\epsilon>0$ is arbitrary, we obtain $x\in Sh(f)$, thus $(3)\implies(1)$ has been proved. This completes the proof of Lemma 1.2. 
\end{proof}

We give a proof of Theorem 1.1.

\begin{proof}[Proof of Theorem 1.1]
First, we prove the ``if" part. Let $C\in\mathcal{C}_{no}(f|_{C(x)})$. Due to Corollary 2.1, since $f|_C\colon C\to C$ is not chain continuous, there are $p\in C$ and $e>0$ such that for any $\delta>0$, there are $\delta$-chains $(x_i)_{i=0}^k$ and $(y_i)_{i=0}^k$ of $f|_C$ with $x_0=y_0=p$ and $d(x_k,y_k)>e$. Fix $0<r<e$ and take any  $\epsilon>0$ with $r+2\epsilon<e$. Since $x\in Sh(f)$, there is $\delta_0>0$ such that every $\delta_0$-pseudo orbit $(x_i)_{i\ge0}$ of $f$ with $x_0=x$ is $\epsilon$-shadowed by some $y\in X$. We fix a pair
\[
((x_i)_{i=0}^K,(y_i)_{i=0}^K)
\]
of $\delta_0$-chains $f|_C$ with $x_0=y_0=p$ and $d(x_K,y_K)>e$. Since $C\subset C(x)$, we have $x\rightarrow q$ for some $q\in C$. We also fix a $\delta_0$-chain $\alpha=(z_i)_{i=0}^L$ of $f$ with $z_0=x$ and $z_L=q$. Since $f|_C$ is chain transitive, by compactness of $C$, there is $M>0$ such that for any $w\in C$, there is a $\delta_0$-chain $(w_i)_{i=0}^m$ of $f|_C$ with $w_0=w$, $w_m=p$, and $m\le M$. It follows that for any $w\in C$, there is a pair
\[
(a^w,b^w)=((a_i^w)_{i=0}^{k_w},(b_i^w)_{i=0}^{k_w})
\]
of $\delta_0$-chains of $f|_C$ with $a_0^w=b_0^w=w$, $d(a_{k_w}^w,b_{k_w}^w)>e$, and $k_w\le K+M$. Given any  $N\ge1$ and $s=(s_i)_{i=1}^N\in\{a,b\}^N$, we inductively define a family of $\delta_0$-chains
\[
\alpha(s,n)=(c(s,n)_i)_{i=0}^{k(s,n)}
\]
of $f|_C$, $1\le n\le N$, by $\alpha(s,1)=s_1^q$ and $\alpha(s,n+1)=s_{n+1}^{c(s,n)_{k(s,n)}}$ for any $1\le n\le N-1$. Then, we consider a family of $\delta_0$-chains
\[
\alpha(s)=(c(s)_i)_{i=0}^{k(s)}=\alpha\alpha(s,1)\alpha(s,2)\cdots\alpha(s,N)
\]
of $f$, $s\in\{a,b\}^N$. Note that $c(s)_0=x$ and $k(s)\le L+N(K+M)$ for all $s\in\{a,b\}^N$; and for any $s,t\in\{a,b\}^N$ with $s\ne t$, we have $d(c(s)_i,c(t)_i)>e$ for some
\[
0\le i \le\min\{k(s),k(t)\}\le L+N(K+M).
\]
By the choice of $\delta_0$, for every $s\in\{a,b\}^N$, there is $x(s)\in X$ such that $d(f^i(x(s)),c(s)_i)\le\epsilon$ for all $0\le i\le k(s)$. It follows that
\[
\{x(s)\colon s\in\{a,b\}^N\}
\]
is an $(L+N(K+M),r)$-separated subset of $B_\epsilon(x)=\{y\in X\colon d(x,y)\le\epsilon\}$. Since $N\ge1$ is arbitrary, we obtain
\begin{align*}
h(f,B_\epsilon(x),r)&=\limsup_{n\to\infty}\frac{1}{n}\log{s_n(f,B_\epsilon(x),r)}\\
&\ge\limsup_{N\to\infty}\frac{1}{L+N(K+M)}\log{s_{L+N(K+M)}(f,B_\epsilon(x),r)}\\
&\ge\limsup_{N\to\infty}\frac{1}{L+N(K+M)}\log{2^N}\\
&=\frac{1}{K+M}\log{2}>0.
\end{align*}
Since $\epsilon>0$ with $r+2\epsilon<e$ is arbitrary, we conclude that $x\in Ent_r(f)$, proving the ``if" part. 

Next, we prove the ``only if" part. Let $x\in Ent_r(f)$ for some $r>0$. Due to Lemma 2.1, it suffices to show that
\[
f|_{CR(f|_{C(x)})}\colon CR(f|_{C(x)})\to CR(f|_{C(x)})
\]
is not chain continuous. For any $\epsilon>0$, let
\[
S_\epsilon=\{y\in C(x)\colon d(y,CR(f|_{C(x)}))\le\epsilon\}
\]
and
\[
T_\epsilon=\{y\in C(x)\colon d(y,CR(f|_{C(x)}))\ge\epsilon\}.
\]
Since
\[
CR(f|_{C(x)})= C(x)\cap CR(f),
\]
we have $T_\epsilon\cap CR(f)=\emptyset$; therefore, for any $p\in T_\epsilon$, we can take a neighborhood $U_p$ of $p$ in $X$ such that
\begin{itemize}
\item[(1)] $d(a,b)\le r$ and $d(f(a),f(b))\le\epsilon$ for all $a,b\in U_p$,
\item[(2)] $f^i(c)\not\in U_p$ for all $c\in U_p$ and $i>0$.
\end{itemize}
We take $p_1,p_2,\dots,p_M\in T_\epsilon$ with $T_\epsilon\subset\bigcup_{j=1}^M U_{p_j}$. Let $U=\bigcup_{j=1}^M U_{p_j} $and take $0<\Delta\le\epsilon$ such that
\[
\{z\in X\colon d(z,T_\epsilon)\le\Delta\}\subset U.
\]
Since $x\in C(x)$ and $C(x)$ is chain stable, we can take a closed neighborhood $K$ of $x$ in $X$ such that
\begin{itemize}
\item[(3)] $d(a,b)\le\epsilon$ for all $a,b\in K$,
\item[(4)] $d(f^i(c),C(x))\le\Delta$ for all $c\in K$ and $i\ge0$.
\end{itemize}
For any $q\in X$ and $n\ge1$, let
\[
A(q,n)=\{0\le i\le n-1\colon f^i(q)\in U\}
\]
and take
\[
g(q,n)\colon A(q,n)\to\{U_{p_j}\colon1\le j\le M\}
\]
such that $f^i(q)\in g(q,n)(i)$ for every $i\in A(q,n)$. By (2), we have $|A(q,n)|\le M$ for all $q\in X$ and $n\ge1$. Note that
\[
|\{(A(q,n),g(q,n))\colon q\in X\}|\le\sum_{k=0}^{\min\{n,M\}}\binom{n}{k}M^k\le (M+1)n^M M^M.
\]
for all $n\ge1$. Since $x\in Ent_r(f)$, we have
\[
h(f,K,r)=\limsup_{n\to\infty}\frac{1}{n}\log{s_n(f,K,r)}>0;
\]
therefore,
\[
s_n(f,K,r)>(M+1)n^M M^M
\]
for some $n\ge1$. This implies that there are $u,v\in K$ such that $d_n(u,v)>r$ and
\[
(A(u,n),g(u,n))=(A(v,n),g(v,n)).
\]
We fix $0\le N\le n-1$ with $d(f^N(u),f^N(v))>r$ and let
\[
(A,g)=(A(u,n),g(u,n))=(A(v,n),g(v,n)).
\]
If
\[
A\cap\{0\le l\le N\}=\emptyset,
\]
then $f^l(u),f^l(v)\not\in U$ for all $0\le l\le N$. By $(4)$ and the choice of $\Delta$, for any $0\le l\le N$, we obtain
\[
\{f^l(u),f^l(v)\}\subset\{w\in X\colon d(w,S_\epsilon)\le\Delta\}.
\]
It follows that
\[
\max\{d(f^l(u),CR(f|_{C(x)})),d(f^l(v),CR(f|_{C(x)}))\}\le\epsilon+\Delta\le2\epsilon
\]
for all $0\le l\le N$. Moreover, by $u,v\in K$  and (3), we obtain $d(u,v)\le\epsilon$. If
\[
A\cap\{0\le l\le N\}\ne\emptyset,
\]
letting
\[
L=\max\:[A\cap \{0\le l\le N\}],
\]
we have $f^L(u),f^L(v)\in g(L)$ and $g(L)\in\{U_{p_j}\colon1\le j\le M\}$. By (1), we have $L<N$ and $d(f^{L+1}(u),f^{L+1}(v))\le\epsilon$. By
\[
A\cap \{L+1\le l\le N\}=\emptyset,
\]
$(4)$, and the choice of $\Delta$, similarly as above, we obtain
\[
\max\{d(f^l(u),CR(f|_{C(x)})),d(f^l(v),CR(f|_{C(x)}))\}\le\epsilon+\Delta\le2\epsilon
\]
for all $L+1\le l\le N$. Since $\epsilon>0$ is arbitrary, we conclude that $f|_{CR(f|_{C(x)})}$ is not chain continuous, thus the ``only if" part has been proved. This completes the proof of Theorem 1.1.
\end{proof} 

For the proof of Theorem 1.2, we need two lemmas.

\begin{lem}
Let $f\colon X\to X$ be a continuous map.
\begin{itemize}
\item[(1)] For any $x,y\in X$ and $r>0$, if $x\in Sh(f)$ and $y\in C(x)\cap Ent_r(f)$, then $x\in Ent_s(f)$ for all $0<s<r$.
\item[(2)] For any  $x,y\in X$ and $r,b>0$, if $x\in Sh(f)$ and $y\in C(x)\cap Ent_{r,b}(f)$, then $x\in Ent_{s,b}(f)$ for all $0<s<r$.
\end{itemize}
\end{lem}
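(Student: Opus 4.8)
The plan is to transport the positive local entropy witnessed at $y$ back to $x$ by prepending a chain from $x$ to $y$ to orbit segments that stay close to $y$, and then straightening the resulting pseudo orbits into true orbits via shadowing at $x$. Parts (1) and (2) are handled by a single argument; part (2) only demands that I additionally keep track of the growth exponent $b$. First I would dispose of the degenerate case $y=x$, which is immediate from the monotonicity $Ent_r(f)\subset Ent_s(f)$ and $Ent_{r,b}(f)\subset Ent_{s,b}(f)$ for $0<s<r$ (an $(n,r)$-separated set is trivially $(n,s)$-separated). So from now on $y\in C(x)\setminus\{x\}$, and the definition of $C(x)$ gives $x\rightarrow y$.

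Next, fix $0<s<r$ and an arbitrary closed neighbourhood $K'$ of $x$, and choose $\epsilon>0$ with $2\epsilon<r-s$ and $B_\epsilon(x)\subset K'$. Since $x\in Sh(f)$, there is $\delta>0$ such that every $\delta$-pseudo orbit of $f$ starting at $x$ is $\epsilon$-shadowed by some point of $X$; and since $x\rightarrow y$, I would fix a $(\delta/2)$-chain $(z_i)_{i=0}^{L}$ of $f$ with $z_0=x$, $z_L=y$. Set $K=B_{\delta/2}(y)$. The point to check is that for every $w\in K$ the concatenation
\[
\alpha_w=(z_0,\dots,z_{L-1},w,f(w),f^2(w),\dots)
\]
is a $\delta$-pseudo orbit of $f$ starting at $x$; the only non-immediate estimate is $d(f(z_{L-1}),w)\le d(f(z_{L-1}),y)+d(y,w)\le\delta/2+\delta/2=\delta$, and this is precisely why I take a $(\delta/2)$-chain and a ball of radius $\delta/2$. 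Hence each $\alpha_w$ is $\epsilon$-shadowed by some $p_w\in X$, and in particular $p_w\in B_\epsilon(x)$.

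The heart of the proof is then the comparison $s_{L+n}(f,B_\epsilon(x),s)\ge s_n(f,K,r)$ for every $n\ge1$. Given an $(n,r)$-separated set $E\subset K$, I claim $\{p_w:w\in E\}$ is an $(L+n,s)$-separated subset of $B_\epsilon(x)$ of the same cardinality: if $w\ne w'$ in $E$, pick $0\le j\le n-1$ with $d(f^j(w),f^j(w'))>r$; since the time-$(L+j)$ coordinate of $\alpha_w$ is $f^j(w)$, shadowing gives $d(f^{L+j}(p_w),f^j(w))\le\epsilon$ and likewise for $w'$, so $d(f^{L+j}(p_w),f^{L+j}(p_{w'}))>r-2\epsilon>s$, whence $d_{L+n}(p_w,p_{w'})>s$ (as $L+j\le L+n-1$), which gives both the separation and the injectivity of $w\mapsto p_w$. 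Taking $E$ of maximal size yields the inequality. Since $h(f,B_\epsilon(x),s)=\limsup_{m\to\infty}\frac{1}{m}\log s_m(f,B_\epsilon(x),s)$ and $n/(L+n)\to1$, this gives
\[
h(f,B_\epsilon(x),s)\ge\limsup_{n\to\infty}\frac{1}{L+n}\log s_n(f,K,r)=h(f,K,r),
\]
so $h(f,K',s)\ge h(f,B_\epsilon(x),s)\ge h(f,K,r)$. For (1), $y\in Ent_r(f)$ forces $h(f,K,r)>0$ (as $K\in\mathcal{K}(y)$), so $h(f,K',s)>0$; for (2), $y\in Ent_{r,b}(f)$ forces $h(f,K,r)\ge b$, so $h(f,K',s)\ge b$. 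As $K'$ was an arbitrary closed neighbourhood of $x$, this yields $x\in Ent_s(f)$, resp.\ $x\in Ent_{s,b}(f)$, for all $0<s<r$.

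The step I expect to require the most care is the one flagged above: ensuring that $\alpha_w$ is a legitimate $\delta$-pseudo orbit for \emph{every} $w$ in a full neighbourhood of $y$, not merely for $w=y$. The $(\delta/2)$-trick settles this, but it has to be noticed. Everything else is routine bookkeeping with $(n,\cdot)$-separated sets, together with the elementary observation that prepending a block of fixed length $L$ multiplies the exponential growth rate by $n/(L+n)\to1$, which affects neither the ``$>0$'' conclusion of (1) nor the ``$\ge b$'' conclusion of (2).
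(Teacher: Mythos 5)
Your proposal is correct and follows essentially the same route as the paper's proof: prepend a $\delta/2$-chain from $x$ to $y$ to orbit segments of points in a small neighbourhood $K$ of $y$, shadow the resulting $\delta$-pseudo orbits to produce a $(k+n,s)$-separated subset of $B_\epsilon(x)$ of cardinality $s_n(f,K,r)$, and conclude $h(f,B_\epsilon(x),s)\ge h(f,K,r)$. The only (harmless) differences are that you make the choice $K=B_{\delta/2}(y)$ explicit where the paper says ``if $K$ is sufficiently small,'' and that you explicitly dispose of the case $y=x$ by monotonicity of separated sets.
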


\begin{proof}
Let $x\in Sh(f)$ and $y\in C(x)\setminus\{x\}$. For any $0<s<r$, we fix $\epsilon>0$ with $s+2\epsilon<r$. Since $x\in Sh(f)$, there is $\delta>0$ such that every $\delta$-pseudo orbit of $(x_i)_{i\ge0}$ of $f$ with $x_0=x$ is $\epsilon$-shadowed by some $z\in X$. Since $y\in C(x)\setminus\{x\}$ and so $x\rightarrow y$, we have a $\delta/2$-chain $(y_i)_{i=0}^k$ of $f$ with $y_0=x$ and $y_k=y$. For $K\in\mathcal{K}(y)$, $n\ge1$, and $r>0$, we take an $(n,r)$-separated subset $E(K,n,r)$ of $K$ with $|E(K,n,r)|=s_n(f,K,r)$. If $K$ is sufficiently small, then for any $p\in E(K,n,r)$,
\[
(z_i^p)_{i=0}^{k+n-1}=(y_0,y_1,\dots,y_{k-1},p,f(p),\dots,f^{n-1}(p))
\]
is a $\delta$-chain of $f$ with $z_0^p=y_0=x$ and so there is $z_p\in X$ with $d(f^i(z_p),z_i^p)\le\epsilon$ for all $0\le i\le k+n-1$. It follows that
\[
\{z_p\colon p\in E(K,n,r)\}
\]
is a $(k+n,s)$-separated subset of $B_\epsilon(x)=\{w\in X\colon d(x,w)\le\epsilon\}$ and so
\[
s_{k+n}(f,B_\epsilon(x),s)\ge|E(K,n,r)|=s_n(f,K,r),
\]
implying
\begin{align*}
h(f,B_\epsilon(x),s)&=\limsup_{n\to\infty}\frac{1}{k+n}\log{s_{k+n}(f,B_\epsilon(x),s)}\\
&\ge\limsup_{n\to\infty}\frac{1}{k+n}\log{s_n(f,K,r)}\\
&=h(f,K,r).
\end{align*}
Since $\epsilon>0$ with $s+2\epsilon<r$ is arbitrary, if $y\in Ent_r(f)$ (resp.\:$y\in Ent_{r,b}(f)$ for some $b>0$), we obtain $x\in Ent_s(f)$ (resp.\:$x\in Ent_{s,b}(f)$). Since $0<s<r$ is arbitrary, the lemma has been proved.
\end{proof}

Let $f\colon X\to X$ be a continuous map. For $\delta,r>0$ and $n\ge1$, we say that two $\delta$-chains $(x_i)_{i=0}^n$ and $(y_i)_{i=0}^n$ of $f$ is {\em $(n,r)$-separated} if $d(x_i,y_i)>r$ for some $0\le i\le n$. Let
\[
s_n(f,X,r,\delta)
\]
denote the largest cardinality of a set of $(n,r)$-separated $\delta$-chains of $f$. The following lemma is from \cite{Mi}.

\begin{lem}[Misiurewicz]
\[
h_{\rm top}(f)=\lim_{r\to0}\lim_{\delta\to0}\limsup_{n\to\infty}\frac{1}{n}\log{s_n(f,X,r,\delta)}.
\]
\end{lem}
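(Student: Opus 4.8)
\emph{Proof plan.} The statement is Misiurewicz's theorem, and the plan is to reproduce its proof. Write $h^{\ast}(r,\delta)=\limsup_{n\to\infty}\frac1n\log s_n(f,X,r,\delta)$; since $s_n(f,X,r,\delta)$ is non-decreasing in $\delta$ and non-increasing in $r$, the iterated limit on the right equals $\sup_{r>0}\inf_{\delta>0}h^{\ast}(r,\delta)$ and exists in $[0,\infty]$, and the goal is to show it equals $h_{\rm top}(f)$. One inequality is immediate: a true orbit segment is a $\delta$-chain for every $\delta>0$, so every $(n,r)$-separated subset of $X$ yields an $(n,r)$-separated family of $\delta$-chains of the same cardinality; hence $s_n(f,X,r,\delta)\ge s_n(f,X,r)$, and letting $n\to\infty$, then $\delta\to0$, then $r\to0$ gives that the right-hand side is at least $h(f,X)=h_{\rm top}(f)$. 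The content lies in the reverse inequality.

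For the reverse inequality I would first record an elementary ``finite shadowing'' fact: for every $r>0$ and $L\ge1$ there is $\delta>0$ such that every $\delta$-chain $(x_i)_{i=0}^{L}$ of $f$ satisfies $d(f^i(x_0),x_i)\le r$ for all $0\le i\le L$. This is a finite induction on $i$ using uniform continuity of $f$, the inductive bound being $d(f^{i+1}(x_0),x_{i+1})\le\omega_f(r_i)+\delta$ where $\omega_f$ is a modulus of continuity of $f$; for fixed $L$ this forces the bound after $L$ steps down to $0$ as $\delta\to0$.

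The heart of the argument is then a counting step. Fix $r>0$ and $L\ge1$, take $\delta$ as above, and fix a minimal subset $F\subset X$ with the property that every $x\in X$ lies within $r$, in the metric $d_L$, of some point of $F$; write $R_L=|F|$. Given a $\delta$-chain $(x_i)_{i=0}^{n}$, partition $\{0,1,\dots,n\}$ into consecutive blocks of length $L$ (the last one possibly shorter); on a block with least index $t$, the finite-shadowing fact gives $d(f^s(x_t),x_{t+s})\le r$ for the relevant $s$, and choosing $a\in F$ with $d_L(x_t,a)\le r$ yields $d(x_{t+s},f^s(a))\le 2r$ throughout the block. Recording the chosen element of $F$ for each block defines a map from the $\delta$-chains $(x_i)_{i=0}^{n}$ into $F^{\lceil(n+1)/L\rceil}$ under which any two chains with the same image are within $4r$ at every coordinate, hence are not $(n,4r)$-separated; therefore $s_n(f,X,4r,\delta)\le R_L^{\lceil(n+1)/L\rceil}$, and taking $\limsup_n\frac1n\log$ gives $h^{\ast}(4r,\delta)\le\frac1L\log R_L$. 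Since $h^{\ast}(4r,\delta)$ is non-decreasing in $\delta$, this bound persists after $\delta\to0$, so $\inf_{\delta>0}h^{\ast}(4r,\delta)\le\frac1L\log R_L$ for every $L\ge1$.

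Finally I would assemble the limits: letting $L\to\infty$ in the last estimate and using the standard comparisons between $(L,r)$-spanning and $(L,r)$-separated cardinalities (namely $R_L\le s_L(f,X,r)$, and $s_L(f,X,r)$ bounded by the corresponding spanning quantity at scale $r/2$), one obtains $\inf_{\delta>0}h^{\ast}(4r,\delta)\le h(f,X,r)$ for every $r>0$; since $\{4r:r>0\}=(0,\infty)$ and the quantities involved are monotone in $r$, taking $r\to0$ yields that the right-hand side is at most $h(f,X)=h_{\rm top}(f)$, which completes the proof. The only genuinely delicate point is the order of the quantifiers — $\delta$ must be chosen only after both $r$ and the auxiliary block length $L$ — and the scheme works precisely because $s_n(f,X,r,\delta)$ is monotone in $\delta$ (and in $r$), which lets the per-$L$ estimate survive passage to the nested limits; the remaining steps are routine bookkeeping with Bowen balls and spanning sets.
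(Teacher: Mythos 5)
Your proposal is correct, but there is nothing in the paper to compare it against: the paper states this as Lemma 3.2 and simply cites Misiurewicz's note \emph{Remark on the definition of topological entropy}, giving no proof. Your argument is a sound self-contained reconstruction of the standard one. The easy inequality (orbit segments are $\delta$-chains, so $s_n(f,X,r,\delta)\ge s_n(f,X,r)$) is fine, modulo the harmless index mismatch between separation over $0\le i\le n$ for chains and $0\le i\le n-1$ in $d_n$. The key upper bound is also correct: the finite-shadowing estimate $d(f^i(x_0),x_i)\le r$ for $0\le i\le L$ follows by finite induction on the modulus of continuity with $\delta$ chosen after $r$ and $L$; coding each length-$L$ block of a $\delta$-chain by an element of a minimal $(L,r)$-spanning set $F$ puts any pairwise $(n,4r)$-separated family of $\delta$-chains in bijection with a subset of $F^{\lceil(n+1)/L\rceil}$, giving $h^{\ast}(4r,\delta)\le\frac{1}{L}\log R_L$; and since $R_L\le s_L(f,X,r)$, passing to $L\to\infty$, then using monotonicity in $\delta$ and $r$, yields the bound by $h(f,X,r)$ and then by $h_{\rm top}(f)$. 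You correctly identify the one delicate point, the quantifier order ($\delta$ chosen after $r$ and $L$), and correctly note that monotonicity of $s_n(f,X,r,\delta)$ in $\delta$ is what lets the per-$L$ estimate survive the nested limits. The only cosmetic remark is that the final comparison with spanning sets at scale $r/2$ is not actually needed, since $R_L\le s_L(f,X,r)$ already suffices.
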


We give a proof of Theorem 1.2.

\begin{proof}[Proof of Theorem 1.2]
First, we prove the ``if" part. Since
\[
h_{\rm top}(f|_{C(x)})=h(f,C(x))>0,
\]
we have $h(f,C(x),r)>0$ for some $r>0$. Taking $0<b\le h(f,C(x),r)$, we obtain $C(x)\cap Ent_{r,b}(f)\ne\emptyset$. Since $x\in Sh(f)$, by Lemma 3.1, this implies $x\in Ent_{s,b}(f)$ for all $0<s<r$, thus the ``if" part has been proved.

Next, we prove the``only if" part. Let $x\in Ent_{r_0,b}(f)$ for some $r_0,b>0$. Then, we have
\[
h(f,K,r_0)=\limsup_{n\to\infty}\frac{1}{n}\log{s_n(f,K,r_0)}\ge b
\]
for all $K\in\mathcal{K}(x)$. Since $C(x)$ is chain stable, taking $0<s<r_0$, we obtain
\[
\limsup_{n\to\infty}\frac{1}{n}\log{s_n(f,C(x),s,\delta)}\ge b
\]
for all $\delta>0$. From Lemma 3.2, it follows that
\begin{align*}
h_{\rm top}(f|_{C_{(x)}})&=\lim_{r\to0}\lim_{\delta\to0}\limsup_{n\to\infty}\frac{1}{n}\log{s_n(f,C(x),r,\delta)}\\
&\ge\lim_{\delta\to0}\limsup_{n\to\infty}\frac{1}{n}\log{s_n(f,C(x),s,\delta)}\\
&\ge b>0,
\end{align*}
thus the ``only if" part has been proved. This completes the proof of Theorem 1.2.
\end{proof}
 
Next, we prove Theorem 1.3. The proof of the following lemma is left to the reader.
 
\begin{lem}
Let $f\colon X\to X$ be a continuous map and let $C\in\mathcal{C}(f)$. For any $\epsilon>0$, there is $\delta>0$ such that every $\delta$-chain $(x_i)_{i=0}^k$ of $f|_{CR(f)}$ with $x_0\in C$ satisfies $d(x_i,C)\le\epsilon$ for all $0\le i\le k$. 
\end{lem}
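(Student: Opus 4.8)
The plan is to reduce the statement to Lemma 1.1, applied not to $f$ itself but to the map $g:=f|_{CR(f)}\colon CR(f)\to CR(f)$, which is a continuous map of the compact metric space $CR(f)$ into itself. Fix $p\in C$ and let $C_g(p)$ denote the set ``$C(\,\cdot\,)$'' introduced just before Lemma 1.1, but formed with $g$ and $CR(f)$ playing the roles of $f$ and $X$; that is, $C_g(p)$ consists of $p$ together with all $y\in CR(f)$ such that for every $\delta>0$ there is a $\delta$-chain of $g$ from $p$ to $y$. By Lemma 1.1 (applied to $g$), $C_g(p)$ is chain stable for $g$. Since $C$ is a closed $g$-invariant subset of $CR(f)$ and the chain stability of $C$ for $g=f|_{CR(f)}$ is, word for word, the conclusion of the present lemma, it suffices to prove that $C_g(p)=C$.

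The inclusion $C\subseteq C_g(p)$ is immediate from the chain transitivity of $f|_C$: for every $y\in C$ and every $\delta>0$ there is a $\delta$-chain from $p$ to $y$ lying in $C\subseteq CR(f)$, which is a $\delta$-chain of $g$. For the reverse inclusion I would establish the following claim: if $p,y\in CR(f)$ and for every $\delta>0$ there is a $\delta$-chain of $g$ from $p$ to $y$, then $p\leftrightarrow y$, so that $y\in C$ whenever $p\in C$. Only $y\rightarrow p$ requires an argument. Given $\epsilon>0$, one takes an $(\epsilon/2)$-chain $(z_i)_{i=0}^{k}$ of $g$ from $p$ to $y$; each $z_i$ lies in some chain component $E_i\in\mathcal{C}(f)$, and $f(z_i)\in E_i$ because $z_i\leftrightarrow f(z_i)$. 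For each $i<k$ one reverses the $i$-th step as follows: by chain transitivity of $f|_{E_{i+1}}$ there is a cycle $z_{i+1}=u_0,u_1,\dots,u_m=z_{i+1}$ inside $E_{i+1}$ whose consecutive jumps are at most $\epsilon/2$; then
\[
d(f(u_{m-1}),f(z_i))\le d(f(u_{m-1}),z_{i+1})+d(z_{i+1},f(z_i))\le \epsilon/2+\epsilon/2=\epsilon,
\]
so one may follow $u_0,\dots,u_{m-1}$ by the single step to $f(z_i)\in E_i$ and then, using chain transitivity of $f|_{E_i}$, by a chain with jumps at most $\epsilon/2$ inside $E_i$ from $f(z_i)$ to $z_i$. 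This is an $\epsilon$-chain of $g$ from $z_{i+1}$ to $z_i$. Concatenating these for $i=k-1,k-2,\dots,0$ produces an $\epsilon$-chain of $g$ from $y$ to $p$; since $\epsilon>0$ was arbitrary, $y\rightarrow p$, hence $p\leftrightarrow y$.

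The only step that is not routine is this reversal, and it is exactly where the restriction to $CR(f)$ matters: a single $\delta$-chain cannot be run backwards in general, but since the chain stays in $CR(f)$ every intermediate point lies in a chain component of $f$, and the cycles provided by the chain transitivity of those components let one trade each forward step between $E_i$ and $E_{i+1}$ for a backward passage. Once $C_g(p)=C$ is known, Lemma 1.1 for $g$ yields precisely the chain stability of $C$ for $f|_{CR(f)}$, which is the assertion of the lemma. (Alternatively one can avoid Lemma 1.1 and argue directly as in its proof: if the conclusion failed, passing to limits would give points $y\in C$ and $z\in CR(f)\setminus C$ with $y\rightarrow z$, and applying the reversal above to the escaping $\delta$-chains would also give $z\rightarrow y$, so that $z\leftrightarrow y$ and $z\in C$, a contradiction.)
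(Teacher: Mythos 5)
The paper states this lemma without proof (``the proof of the following lemma is left to the reader''), so there is no argument of the author's to set yours against; judged on its own, your proof is correct. The reduction to Lemma 1.1 applied to $g=f|_{CR(f)}$ is legitimate: that lemma's proof uses only compactness and continuity, so it applies verbatim to the continuous self-map $g$ of the compact set $CR(f)$, and once you know $C_g(p)=C$ its conclusion is word for word the statement to be proved. The substantive content is therefore the identification $C_g(p)=C$, whose nontrivial inclusion rests on your reversal claim: a $\delta$-chain lying entirely in $CR(f)$ can be traversed backwards with arbitrarily small jumps. Your argument for this is sound --- each $z_i$ lies in a chain component $E_i$ with $f(z_i)\in E_i$ (since $z_i\leftrightarrow f(z_i)$), the cycle through $z_{i+1}$ supplied by chain transitivity of $f|_{E_{i+1}}$ followed by the single step to $f(z_i)$ and a chain in $E_i$ back to $z_i$ is an $\epsilon$-chain of $f|_{CR(f)}$ from $z_{i+1}$ to $z_i$, and the triangle-inequality estimate at the junction is the right one (it also covers the degenerate case $m=1$). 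This is the standard Conley-theoretic fact that the chain relation is symmetric on the chain recurrent set, and you correctly flag that this is precisely where the hypothesis that the chain stays in $CR(f)$ enters; without it the lemma would be false. Your parenthetical alternative, rerunning the compactness argument of Lemma 1.1 directly, is also viable and is presumably closer to what the author had in mind by ``left to the reader.''
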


\begin{proof}[Proof of Theorem 1.3]
Due to Corollary 2.1, it is sufficient to show that each of the three conditions implies that $f|_C\colon C\to C$ is not chain continuous.
 
(1) Taking $y\in\omega(x,f)\cap Sen(f|_{CR(f)})$, we have
\[
y\in C\cap Sen_{e_0}(f|_{CR(f)})
\]
for some $e_0>0$. Taking $0<e<e_0$, we obtain that for any $\delta>0$, there are $\delta$-chains $(x_i)_{i=0}^k$ and $(y_i)_{i=0}^k$ of $f|_{CR(f)}$ with $x_0=y_0=x$ and $d(x_k,y_k)>e$. By Lemma 3.3, this implies that $f|_C$ is not chain continuous and thus $C\in\mathcal{C}_{no}(f|_{C(x)})$.

(2) Since $(x,y)$ is a scrambled pair for $f$, we have
\[
\liminf_{i\to\infty}d(f^i(x),f^i(y))=0;
\]
therefore, there are a sequence $0\le i_1<i_2<\cdots$ and $z\in X$ such that
\[
\lim_{j\to\infty}d(f^{i_j}(x),f^{i_j}(y))=0
\]
and
\[
\lim_{j\to\infty}f^{i_j}(x)=z,
\]
which implies $z\in\omega(x,f)\cap\omega(y,f)$ and so $\omega(y,f)\subset C$. By
\[
\omega(x,f)\cup\omega(y,f)\subset C,
\]
we obtain
\[
\lim_{i\to\infty}d(f^i(x),C)=\lim_{i\to\infty}d(f^i(y),C)=0.
\]
On the other hand, since $(x,y)$ is a scrambled pair for $f$, we have
\[
\limsup_{i\to\infty}d(f^i(x),f^i(y))>0.
\]
These condition clearly imply that $f|_C$ is not chain continuous and thus $C\in\mathcal{C}_{no}(f|_{C(x)})$.

(3) Since $\omega(x,f)$ is not a minimal set for $f$, we have a closed $f$-invariant subset $S$ of $\omega(x,f)$ such that
\[
\emptyset\ne S\ne\omega(x,f).
\]
Fix $p\in S$, $q\in\omega(x,f)$, and $e>0$ with $d(p,q)>e$. Since $\omega(x,f)\subset C$ and $f|_C\colon C\to C$ is chain transitive, for any $\delta>0$, there are $\delta$-chains $(x_i)_{i=0}^k$ and $(y_i)_{i=0}^k$ of $f|_C$ with $x_0=y_0=y_k=p$ and $x_k=q$. This implies that $f|_C$ is not chain continuous and thus $C\in\mathcal{C}_{no}(f|_{C(x)})$.
\end{proof}

Finally, we give an example mentioned in Remark 1.5. For a continuous map $f\colon X\to X$, $C\in\mathcal{C}(f)$ is said to be {\em terminal} if $C$ is chain stable. The proof of the following lemma is left to the reader.

\begin{lem}
Let $f\colon X\to X$ be a continuous map. For any $x\in X$ and $C\in\mathcal{C}(f)$ with $\omega(x,f)\subset C$, if $C$ is terminal, then
\[
C(x)=\{f^i(x)\colon i\ge0\}\cup C.
\]
\end{lem}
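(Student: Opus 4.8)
The plan is to prove the two inclusions $\{f^i(x):i\ge0\}\cup C\subset C(x)$ and $C(x)\subset\{f^i(x):i\ge0\}\cup C$ separately. The first is routine: $x\in C(x)$ by definition, and the genuine orbit segment $(x,f(x),\dots,f^i(x))$ is a $\delta$-chain for every $\delta>0$, so $x\rightarrow f^i(x)$ and hence $\{f^i(x):i\ge0\}\subset C(x)$. For $C\subset C(x)$, note that $\omega(x,f)\neq\emptyset$ by compactness, and that for any $z\in\omega(x,f)$ and $\delta>0$ one can pick $n\ge1$ with $d(f^n(x),z)\le\delta$, making $(x,f(x),\dots,f^{n-1}(x),z)$ a $\delta$-chain from $x$ to $z$; thus $x\rightarrow z$, so $\emptyset\neq\omega(x,f)\subset C\cap C(x)$, and since $C(x)$ is chain stable (Lemma 1.1) Remark 1.2 yields $C\subset C(x)$.

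For the reverse inclusion, the main tool I would first isolate is an orbit-shift fact: \emph{if $a\rightarrow y$ and $y\notin\{a,f(a)\}$, then $f(a)\rightarrow y$.} Granting it, a short computation gives $C(x)=\{x\}\cup C(f(x))$ (the inclusion $\subset$ uses the orbit-shift fact with $a=x$, the inclusion $\supset$ being immediate from transitivity of $\rightarrow$), and iterating yields $C(x)=\{x,f(x),\dots,f^{n-1}(x)\}\cup C(f^n(x))$ for every $n\ge1$; in particular $C(x)\setminus\{f^i(x):i\ge0\}\subset C(f^n(x))$ for all $n$. To conclude, fix $\epsilon>0$; since $C$ is terminal, pick $\eta>0$ so that every $\eta$-chain of $f$ starting in $C$ stays within $\epsilon$ of $C$, and then, using uniform continuity of $f$, pick $\delta\in(0,\eta/2]$ with $d(a,b)\le\delta\Rightarrow d(f(a),f(b))\le\eta/2$. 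As $\omega(x,f)\subset C$ forces $d(f^n(x),C)\to0$, choose $N$ and $c\in C$ with $d(f^N(x),c)<\delta$. For any $w$ with $f^N(x)\rightarrow w$, take a $\delta$-chain $f^N(x)=w_0,w_1,\dots,w_l=w$; then $(c,w_1,\dots,w_l)$ is an $\eta$-chain from $c\in C$, since $d(f(c),w_1)\le d(f(c),f(w_0))+d(f(w_0),w_1)\le\eta/2+\delta\le\eta$, whence $d(w,C)\le\epsilon$. Thus $C(f^N(x))\subset\{w\in X:d(w,C)\le\epsilon\}$, so $C(x)\setminus\{f^i(x):i\ge0\}\subset\{w\in X:d(w,C)\le\epsilon\}$; letting $\epsilon\to0$ and using that $C$ is closed gives $C(x)\setminus\{f^i(x):i\ge0\}\subset C$, which is the desired reverse inclusion.

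It remains to prove the orbit-shift fact, which I expect to be the only delicate step. If $y\neq a$, then arbitrarily fine $\delta$-chains from $a$ to $y$ exist, and the length of a shortest one is nonincreasing in $\delta$; so either it equals $1$ for all $\delta$, which forces $d(f(a),y)=0$, i.e.\ $y=f(a)$, contrary to hypothesis, or else for all sufficiently small $\delta$ every $\delta$-chain $(z_i)_{i=0}^k$ from $a$ to $y$ has $k\ge2$. In the latter case, given $\epsilon>0$, choose $\delta$ small relative to the modulus of uniform continuity of $f$, take such a chain, and check that $(f(a),z_2,z_3,\dots,z_k)$ is an $\epsilon$-chain from $f(a)$ to $y$: the only non-immediate step is $d(f^2(a),z_2)\le d(f^2(a),f(z_1))+d(f(z_1),z_2)$, both summands being $\le\epsilon/2$ once $\delta$ is small enough. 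Hence $f(a)\rightarrow y$. The subtlety here — and the reason the chain must be forced to have length $\ge2$ and the point $z_1$ discarded rather than simply overwritten by $f(a)$ — is that $f(a)$ cannot be literally inserted into a chain that only passes within $\delta$ of $f(a)$; the splicing has to be arranged so that this continuity error is absorbed by the coarser tolerance $\epsilon$.
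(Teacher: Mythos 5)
The paper gives no proof of this lemma (it is explicitly ``left to the reader''), so there is nothing to compare against; judged on its own, your argument is correct and complete. Both inclusions check out: the easy direction correctly combines $\omega(x,f)\ne\emptyset$ with Remark 1.2 to get $C\subset C(x)$, and the orbit-shift fact is proved carefully --- the dichotomy on the minimal length of $\delta$-chains from $a$ to $y$ does rule out $y=f(a)$ in the length-one case, and discarding $z_1$ rather than overwriting it is exactly the right way to splice $f(a)$ into the chain, since the resulting sequence still has positive length ($k-1\ge1$) and the single modified step is controlled by uniform continuity. The iteration $C(x)=\{x,\dots,f^{n-1}(x)\}\cup C(f^n(x))$ and the final appeal to terminality of $C$ (passing from a $\delta$-chain based at $f^N(x)$ to an $\eta$-chain based at a nearby point $c\in C$, then letting $\epsilon\to0$ and using that $C$ is closed) are all sound. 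A mild alternative, probably closer to what the author had in mind, is to skip the orbit-shift machinery entirely: for fixed $\epsilon$ choose $\eta$ by chain stability of $C$, choose $N$ with $d(f^N(x),C)$ small, and observe that a sufficiently fine $\delta$-chain from $x$ tracks the true orbit up to time $N$ and thereafter stays $\epsilon$-close to $C$; this yields the same conclusion in one pass. Your route costs a little more work up front but isolates a reusable fact ($C(x)=\{x\}\cup C(f(x))$) of independent interest.
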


\begin{ex}
\normalfont
This example is taken from \cite{K4}. Let $\sigma\colon[-1,1]^\mathbb{N}\to[-1,1]^\mathbb{N}$ be the shift map and let $d$ be the metric on $[-1,1]^\mathbb{N}$ defined by
\[
d(x,y)=\sup_{i\ge1}2^{-i}|x_i-y_i|
\]
for all $x=(x_i)_{i\ge1},y=(y_i)_{i\ge1}\in[-1,1]^\mathbb{N}$.  Let $s=(s_k)_{k\ge 1}$ be a sequence of numbers with $1>s_1>s_2>\cdots$ and $\lim_{k\to\infty}s_k=0$. Put
\[
S=\{0\}\cup\{-s_k\colon k\ge1\}\cup\{s_k\colon k\ge1\},
\]
a closed subset of $[-1,1]$. We define a closed $\sigma$-invariant subset $X$ of $S^\mathbb{N}$ by
\[
X=\{x=(x_i)_{i\ge1}\in S^\mathbb{N}\colon|x_1|\ge|x_2|\ge\cdots\}.
\]
Let $f=\sigma|_X\colon X\to X$, $X_k=\{-s_k,s_k\}^\mathbb{N}$ for each $k\ge1$, and let $X_0=\{0^\infty\}$. Then, we have 
\[
CR(f)=\{x=(x_i)_{i\ge1}\in X\colon|x_1|=|x_2|=\cdots\}=X_0\cup\bigcup_{k\ge1}X_k
\]
and
\[
\mathcal{C}(f)=\{X_0\}\cup\{X_k\colon k\ge1\}.
\]
Note that $X_0$ is terminal. For a rapidly decreasing sequence $s=(s_k)_{k\ge 1}$, we can show that $f$ satisfies the shadowing property and so $X=Sh(f)$. Let $x=(s_1,s_2,s_3,\dots)$ and note that $x\in X$. Since $\omega(x,f)=X_0$ and $X_0$ is terminal, by Lemma 3.4, we obtain
\[
C(x)=\{f^i(x)\colon i\ge0\}\cup X_0.
\]
By Theorem 1.1, we see that
\[
x\not\in\bigcup_{r>0}Ent_r(f).
\]
We shall show that $x\in Ent(f)$. Let
\[
x_k=(s_1,s_2,\dots,s_k,s_k,s_k,\dots),
\]
$k\ge1$, and note that $x_k\in X$ for each $k\ge1$. For any $k\ge1$, since $h(f,X_k)\ge\log{2}>0$, we have
$h(f,X_k,r_k)>0$ and so $X_k\cap Ent_{r_k}(f)\ne\emptyset$ for some $r_k>0$. For every $k\ge1$, since $X_k\subset C(x_k)$, we obtain $C(x_k)\cap Ent_{r_k}(f)\ne\emptyset$; therefore, Lemma 3.1 implies that $x_k\in Ent_{s_k}(f)$ for all $0<s_k<r_k$. In particular, we have $x_k\in Ent(f)$ for all $k\ge1$. Since $\lim_{k\to\infty}x_k=x$, we conclude that $x\in Ent(f)$. 
\end{ex}

\section{Proof of Theorem 1.4 and an example}

In this section, we prove Theorem 1.4 and give an example mentioned in Section 1.

\begin{proof}[Proof of Theorem 1.4]
First, we prove the implication $(1)\implies(2)$. If $f$ is not h-expansive, then
\[
h_f^\ast(\epsilon)=\sup_{x\in X}h(f,\Phi_{\epsilon}(x))>0
\]
for all $\epsilon>0$. Given any $\epsilon>0$, we take $x\in X$ with $h(f,\Phi_{\epsilon/2}(x))>0$ and fix $r>0$ with $h(f,\Phi_{\epsilon/2}(x),r)>0$. For any $0<\Delta\le r$, we take an open cover $\mathcal{U}=\{U_i\colon 1\le i\le m\}$ of $X$ such that $d(a,b)\le\Delta$ for all $1\le i\le m$ and $a,b\in U_i$. Since
\[
h(f,\Phi_{\epsilon/2}(x),r)=\limsup_{n\to\infty}\frac{1}{n}\log{s_n(f,\Phi_{\epsilon/2}(x),r)}>0,
\]
we have
\[
s_n(f,\Phi_{\epsilon/2}(x),r)>m^2
\]
for some $n\ge1$. Then, there are $u,v\in\Phi_{\epsilon/2}(x)$ such that $d_n(u,v)>r$, $u,v\in U_{i_0}$, and $f^{n-1}(u),f^{n-1}(v)\in U_{i_{n-1}}$ for some $U_{i_0},U_{i_{n-1}}\in\mathcal{U}$. It follows that 
\[
\max\{d(u,v),d(f^{n-1}(u),f^{n-1}(v))\}\le\Delta\le r
\]
and 
\[
r<\max_{0\le i\le n-1}d(f^i(u),f^i(v))\le\epsilon.
\]
Since $0<\Delta\le r$ is arbitrary, this implies the existence of $r>0$ as in (2). Since $\epsilon>0$ is arbitrary, $(1)\implies(2)$ has been proved.

Next, we prove the implication $(2)\implies(3)$. The proof is similar to the proof of Lemma 3.1 in \cite{ACCV1}. Given any $\epsilon>0$, we choose $r>0$ as in (2). We fix $0<\gamma<\min\{\epsilon,r/2\}$. Since $f$ has the shadowing property, there is $\delta>0$ such that every $\delta$-pseudo orbit of $f$ is $\gamma$-shadowed by some point of $X$. By the choice of $r$, we obtain a pair
\[
(\alpha(0),\alpha(1))=((x_i)_{i=0}^k,(y_i)_{i=0}^k)
\]
of $\delta$-chains of $f$ with $(x_0,x_k)=(y_0,y_k)$ and
\[
r\le\max_{0\le i\le k}d(x_i,y_i)\le\epsilon.
\]
Then, the chain transitivity of $f$ gives a $\delta$-chain $\beta=(z_i)_{i=0}^l$ of $f$ with $z_0=x_k=y_k$ and $z_l=x_0=y_0$. For any $s=(s_n)_{n\ge1}\in\{0,1\}^\mathbb{N}$, we consider a $\delta$-pseudo orbit
\[
\Gamma(s)=\alpha(s_1)\beta\alpha(s_2)\beta\alpha(s_3)\beta\cdots
\]
of $f$. Let $m=k+l$,
\[
Y=\{y\in X\colon\text{$\Gamma(s)$ is $\gamma$-shadowed by $y$ for some $s\in\{0,1\}^\mathbb{N}$}\},
\]
and define a map $\pi\colon Y\to\{0,1\}^\mathbb{N}$ so that $\Gamma(\pi(y))$ is $\gamma$-shadowed by $y$ for all $y\in Y$. By a standard argument, we can show that the following conditions are satisfied
\begin{itemize}
\item $Y$ is a closed subset of $X$,
\item $f^{m}(Y)\subset Y$,
\item $\pi$ is well-defined,
\item $\pi$ is surjective,
\item $\pi$ is continuous,
\item $\pi\circ f^m=\sigma\circ\pi$, where $\sigma\colon\{0,1\}^\mathbb{N}\to\{0,1\}^\mathbb{N}$ is the shift map.
\end{itemize}
It follows that $Y$ is a closed $f^m$-invariant subset of $Y$ and
\[
\pi:(Y,f^m)\to(\{0,1\}^\mathbb{N},\sigma)
\]
is a factor map. By the definition of $\Gamma(s)$, $s\in\{0,1\}^\mathbb{N}$, we see that
\[
\sup_{i\ge0}d(f^i(x),f^i(y))\le3\epsilon
\]
for all $x,y\in Y$. Since $\epsilon>0$ is arbitrary, $(2)\implies(3)$ has been proved.

We shall prove the implication $(3)\implies(1)$. Given any $\epsilon>0$, we take $m\ge1$ and $Y$ as in (3). Take $p\in Y$ and note that $Y\subset\Phi_\epsilon(p)$. It follows that
\[
h_f^\ast(\epsilon)\ge h(f,\Phi_\epsilon(p))\ge h(f,Y)\ge\frac{1}{m}h(f^m,Y)\ge\frac{1}{m}h(\sigma,\{0,1\}^\mathbb{N})=\frac{1}{m}\log{2}>0.
\]
Since $\epsilon>0$ is arbitrary, $(3)\implies(1)$ has been proved.

The implication $(4)\implies(2)$ is obvious from the definitions. It remains to prove the implication $(3)\implies(4)$. Given any $\epsilon>0$, we take $m\ge1$ and $Y$ as in (3). Since
\[
h(f^m,Y)\ge h(\sigma,\{0,1\}^\mathbb{N})=\log{2}>0,
\]
by Corollary 2.4 of \cite{BGKM}, there is a scrambled pair $(x,y)\in Y^2$ for $f^m$. Then, $(x,y)$ is also a scrambled pair for $f$ and satisfies
\[
\sup_{i\ge0}d(f^i(x),f^i(y))\le\epsilon
\]
because $x,y\in Y$. Since $\epsilon>0$ is arbitrary, $(4)\implies(3)$ has been proved. This completes the proof of Theorem 1.4.
\end{proof}

We use Theorem 1.4 to obtain a counter-example for a question in \cite{ACCV2}. The example will be given as an inverse limit of the full-shift $(\{0,1\}^\mathbb{Z},\sigma)$ with respect to a factor map
\[
F\colon(\{0,1\}^\mathbb{Z},\sigma)\to(\{0,1\}^\mathbb{Z},\sigma).
\]

We need three auxiliary lemmas. A homeomorphism $f\colon X\to X$ is said to be expansive if there is $e>0$ such that
\[
\Gamma_e(x)=\{x\}
\]
for all $x\in X$ and such $e$ is called an {\em expansive constant} for $f$. It is known that for a homeomorphism $f\colon X\to X$ with an expansive constant $e>0$ and $x,y\in X$, if
\[
\sup_{i\ge0}d(f^i(x),f^i(y))\le e,
\]
then $(x,y)$ is an asymptotic pair for $f$. The following lemma gives a sufficient condition for an inverse limit system to be h-expansive.

\begin{lem}
Let $\pi=(\pi_n\colon(X_{n+1},f_{n+1})\to(X_n,f_n))_{n\ge1}$ be a sequence of factor maps such that for every $n\ge1$, $f_n\colon X_n\to X_n$ is an expansive transitive homeomorphism with the shadowing property. Let
\[
(Y,g)=\lim_\pi(X_n,f_n)
\]
and note that $g$ is a transitive homeomorphism with the shadowing property. If $g$ is not h-expansive, then for any $N\ge 1$, there are $M\ge N$ and a scrambled pair $(x_{M+1},y_{M+1})\in X_{M+1}^2$ for $f_{M+1}$ such that $(\pi_M(x_{M+1}),\pi_M(y_{M+1}))$ is an asymptotic pair for $f_M$.
\end{lem}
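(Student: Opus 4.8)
The plan is to apply Theorem 1.4 to $g$ and then push the scrambled pair it produces down to a single coordinate of the inverse limit. Regard $Y$ as a subspace of $\prod_{n\ge1}X_n$ with a compatible metric $D$, and write $d$ also for the metric on each $X_n$. For each $n\ge1$ let $p_n\colon(Y,g)\to(X_n,f_n)$ be the projection factor map, so that $p_n(g^i(z))=f_n^i(p_n(z))$ and $p_n=\pi_n\circ p_{n+1}$. Since $g$ is transitive it is chain transitive, and it has the shadowing property, so Theorem 1.4 applies; as $g$ is not h-expansive, condition $(4)$ of that theorem holds for $g$: for every $\epsilon>0$ there is a scrambled pair $(x,y)\in Y^2$ for $g$ with $\sup_{i\ge0}D(g^i(x),g^i(y))\le\epsilon$.

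Now fix $N\ge1$ and let $e_n>0$ be an expansive constant for $f_n$, $1\le n\le N$. Each $p_n$ is uniformly continuous on the compact space $Y$, so I would first choose $\epsilon>0$ small enough that $D(a,b)\le\epsilon$ forces $d(p_n(a),p_n(b))\le e_n$ for all $1\le n\le N$, and then apply condition $(4)$ of Theorem 1.4 with this $\epsilon$ to obtain a scrambled pair $(x,y)\in Y^2$ for $g$ with $\sup_{i\ge0}D(g^i(x),g^i(y))\le\epsilon$. Writing $x=(x_n)_{n\ge1}$, $y=(y_n)_{n\ge1}$ with $x_n=p_n(x)$, $y_n=p_n(y)$, the choice of $\epsilon$ yields $\sup_{i\ge0}d(f_n^i(x_n),f_n^i(y_n))\le e_n$ for $1\le n\le N$, hence $(x_n,y_n)$ is an asymptotic pair for $f_n$ for each $1\le n\le N$, by the fact recalled just before the lemma that $\sup_{i\ge0}d(f_n^i(x_n),f_n^i(y_n))\le e_n$ implies asymptoticity.

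Finally I would transfer the scrambledness of $(x,y)$ in $Y$ to the higher coordinates. From $\liminf_i D(g^i(x),g^i(y))=0$ together with uniform continuity of $p_n$, one gets $\liminf_i d(f_n^i(x_n),f_n^i(y_n))=0$ for every $n\ge1$. From $\limsup_i D(g^i(x),g^i(y))>0$, one gets that $(x_n,y_n)$ fails to be asymptotic for $f_n$ for at least one $n$: otherwise, passing to a subsequence of iterates along which $g^i(x)$ and $g^i(y)$ converge in $Y$ and comparing coordinates, the two limits would agree in every coordinate and hence coincide, contradicting that $D(g^i(x),g^i(y))$ stays bounded away from $0$ along that subsequence. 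Let $n_0$ be the least index with $(x_{n_0},y_{n_0})$ not asymptotic for $f_{n_0}$, and set $M=n_0-1$. Then $n_0\ge N+1$ by the second paragraph, so $M\ge N\ge1$; $(x_M,y_M)=(\pi_M(x_{M+1}),\pi_M(y_{M+1}))$ is asymptotic for $f_M$ by minimality of $n_0$; and $(x_{M+1},y_{M+1})$ is a scrambled pair for $f_{M+1}$, since it is not asymptotic, giving $\limsup_i d(f_{M+1}^i(x_{M+1}),f_{M+1}^i(y_{M+1}))>0$, while $\liminf_i d(f_{M+1}^i(x_{M+1}),f_{M+1}^i(y_{M+1}))=0$ from the first part of this paragraph. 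The only genuinely delicate points are the two arguments relating convergence and separation in $Y$ to the corresponding facts in the factors $X_n$; these are standard for inverse limits, but I would write them out carefully.
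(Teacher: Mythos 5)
Your proposal is correct and follows essentially the same route as the paper: choose $\epsilon$ so that closeness in $Y$ forces closeness within the expansive constants of $f_1,\dots,f_N$, invoke condition (4) of Theorem 1.4 to get a uniformly $\epsilon$-close scrambled pair, deduce asymptoticity in coordinates $1,\dots,N$, transfer proximality to every coordinate, and take the first coordinate where asymptoticity fails. The only cosmetic difference is that you spell out the subsequence argument showing that coordinatewise asymptoticity would force $(x,y)$ to be asymptotic for $g$, which the paper simply asserts.
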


\begin{proof}
Let $D$ be a metric on $Y$. Let $d_n$ be a metric on $X_n$ and $e_n>0$ be an expansive constant for $f_n$ for each $n\ge1$. Given any $N\ge1$, we take $\epsilon_N>0$ such that for any $p=(p_n)_{n\ge1}$, $q=(q_n)_{n\ge1}\in Y$, $D(p,q)\le\epsilon_N$ implies $d_n(p_n,q_n)\le e_n$ for all $1\le n\le N$. Since $g$ is not h-expansive, by Theorem 1.4, there is a scrambled pair
\[
(x,y)=((x_n)_{n\ge1},(y_n)_{n\ge1})\in Y^2
\]
for $g$ with
\[
\sup_{i\ge0}D(g^i(x),g^i(y))\le\epsilon_N.
\]
Then, for every $1\le n\le N$, since 
\[
\sup_{i\ge0}d_n(f_n^i(x_n),f_n^i(y_n))=\sup_{i\ge0}d_n(g^i(x)_n,g^i(y)_n)\le e_n,
\]
$(x_n,y_n)$ is an asymptotic pair for $f_n$. Since $(x,y)$ is a scrambled pair for $g$ and so a proximal for $g$, $(x_n,y_n)$ is a proximal pair for $f_n$ for all $n\ge1$. If $(x_n,y_n)$ is an asymptotic pair for $f_n$ for all $n\ge1$, then $(x,y)$ is an asymptotic pair for $g$, which is a contradiction. Thus, there is $m\ge 1$ such that $(x_{m+1},y_{m+1})$ is a scrambled pair for $f_{m+1}$. Letting
\[
M=\min\{m\ge1\colon\text{$(x_{m+1},y_{m+1})$ is a scrambled pair for $f_{m+1}$}\},
\]
we see that $M\ge N$, $(x_{M+1},y_{M+1})$ is a scrambled pair for $f_{M+1}$, and $(x_M,y_M)$ is an asymptotic pair for $f_M$, thus the lemma has been proved.
\end{proof}

A map $F\colon X\to X$ is said to be an {\em open map} if for any open subset $U$ of $X$, $f(U)$ is an open subset of $X$. Any continuous open map $F\colon X\to X$ satisfies the following property: for every $r>0$, there is $\delta>0$ such that for any $s,t\in X$ with $d(s,t)\le\delta$ and $u\in F^{-1}(s)$, we have $d(u,v)\le r$ for some $v\in F^{-1}(t)$.

For a continuous map $f\colon X\to X$, a sequence $(x_i)_{i\ge0}$ of points in $X$ is called a {\em limit-pseudo orbit} of $f$ if
\[
\lim_{i\to\infty}d(f(x_i),x_{i+1})=0,
\]
and said to be {\em limit shadowed} by $x\in X$
if
\[
\lim_{i\to\infty}d(f^i(x),x_i)=0.
\]

The next lemma is needed for the proof of Lemma 4.3.

\begin{lem}
Let $f\colon X\to X$ be a homeomorphism and let $F\colon(X,f)\to(X,f)$ be a factor map such that
\begin{itemize}
\item[(1)] $F$ is an open map,
\item[(2)] $d(v,v')\ge1$ for all $t\in X$ and $v,v'\in F^{-1}(t)$ with $v\ne v'$.
\end{itemize}
Suppose that
\begin{itemize}
\item[(3)] $(x_i)_{i\ge0}$ is a limit-pseudo orbit of $f$ and limit-shadowed by $x\in X$,
\item[(4)] $(z_i)_{i\ge0}$ is a limit-pseudo orbit of $f$ with $z_i\in F^{-1}(x_i)$ for all $i\ge0$. 
\end{itemize}
Then, there is $z\in F^{-1}(x)$ such that $(z_i)_{i\ge0}$ is limit-shadowed by $z$. 
\end{lem}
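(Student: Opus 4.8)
The plan is to \emph{lift} the limit-shadowing orbit of $x$ through $F$ to a limit-shadowing point for $(z_i)_{i\ge0}$, exploiting that hypotheses (1) and (2) make $F$ behave like a covering map at small scales. First I fix $r_0\in(0,1/2)$ and use openness of $F$ (via the property of continuous open maps recalled above) to get $\delta_0>0$ such that whenever $d(s,t)\le\delta_0$ and $u\in F^{-1}(s)$, there is $v\in F^{-1}(t)$ with $d(u,v)\le r_0$; by (2) such a $v$ is unique, since two candidates would lie at distance at most $2r_0<1$ in a common fiber. Because $(x_i)_{i\ge0}$ is limit-shadowed by $x$ we have $d(f^i(x),x_i)\to0$, so for all $i$ past some $N_0$ we may lift $z_i\in F^{-1}(x_i)$ to the unique point $w_i\in F^{-1}(f^i(x))$ with $d(w_i,z_i)\le r_0$.

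Next I would establish three facts about $(w_i)$. (i) $d(w_i,z_i)\to0$: given any $\eta\in(0,r_0)$, openness of $F$ yields some $\delta_\eta>0$, and for $i$ large enough that $d(f^i(x),x_i)\le\delta_\eta$ there is a point of $F^{-1}(f^i(x))$ within $\eta$ of $z_i$, which by uniqueness equals $w_i$; hence $d(w_i,z_i)\le\eta$ eventually. (ii) $f(w_i)=w_{i+1}$ for all large $i$: since $F\circ f=f\circ F$, both $f(w_i)$ and $w_{i+1}$ lie in $F^{-1}(f^{i+1}(x))$, while $d(f(w_i),z_{i+1})\le d(f(w_i),f(z_i))+d(f(z_i),z_{i+1})\to0$ by uniform continuity of $f$, fact (i), and the limit-pseudo-orbit condition on $(z_i)$; so eventually $d(f(w_i),z_{i+1})\le r_0$, and uniqueness forces $f(w_i)=w_{i+1}$. (iii) Choosing $N$ large enough that $w_i$ is defined and $f(w_i)=w_{i+1}$ for every $i\ge N$, one gets $w_i=f^{\,i-N}(w_N)$ for all $i\ge N$.

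Finally I set $z=f^{-N}(w_N)$. Since $f$ is a homeomorphism commuting with $F$, we have $F(z)=f^{-N}(F(w_N))=f^{-N}(f^N(x))=x$, so $z\in F^{-1}(x)$; and for $i\ge N$ we have $f^i(z)=f^{\,i-N}(w_N)=w_i$, hence $d(f^i(z),z_i)=d(w_i,z_i)\to0$ by (i), i.e.\ $(z_i)_{i\ge0}$ is limit-shadowed by $z$.

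The step I expect to be most delicate is the pair of uniqueness arguments underlying (i) and (ii): one must check that the lift produced by openness of $F$ at a tiny scale $\eta$ genuinely coincides with the canonical lift $w_i$ fixed at scale $r_0$, and that the approximate orbit point $f(w_i)$ is literally equal to $w_{i+1}$ rather than merely close to it. Hypothesis (2) is precisely what turns these near-equalities into equalities, since distinct points of a fiber are a full unit apart, so any two lifts of the same base point both lying within $r_0<1/2$ of it must agree; once this is in place the remainder is routine bookkeeping with the three limit conditions.
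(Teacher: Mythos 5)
Your proposal is correct and follows essentially the same route as the paper's proof: openness of $F$ supplies lifts of $z_i$ into the fiber $F^{-1}(f^i(x))$, and hypothesis (2) (fiber points at mutual distance $\ge 1$) turns the resulting near-coincidences into equalities, after which $z=f^{-N}(w_N)$ does the job. The only difference is organizational — you define a lift $w_i$ at every index and then prove $f(w_i)=w_{i+1}$, whereas the paper fixes a single lift $w_N$ and shows inductively that $f^j(w_N)$ stays within $r_{N+j}\to 0$ of $z_{N+j}$ — but the underlying mechanism is identical.
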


\begin{proof}
By (3), letting $\delta_i=d(x_i,f^i(x))$, $i\ge0$, we have  $\lim_{i\to\infty}\delta_i=0$. By (1), we can take a sequence $r_i>0$, $i\ge0$, such that 
\begin{itemize}
\item $\lim_{i\to\infty}r_i=0$,
\item for any $i\ge0$, $s,t\in X$ with $d(s,t)\le\delta_i$, and $u\in F^{-1}(s)$, we have $d(u,v)\le r_i$ for some $v\in F^{-1}(t)$.
\end{itemize}
With use of (4), we fix $N\ge0$ satisfying the following conditions
\begin{itemize}
\item $0<r_i<1/2$ for all $i\ge N$,
\item $d(u,v)\le r_i$ implies $d(f(u),f(v))\le1/4$ for all $i\ge N$ and $u,v\in X$,
\item $d(f(z_i),z_{i+1})\le1/4$ for all $i\ge N$.
\end{itemize}
By $\delta_N=d(x_N,f^N(x))$ and $z_N\in F^{-1}(x_N)$, we obtain $w_N\in F^{-1}(f^N(x))$ with $d(z_N,w_N)\le r_N$. Note that
\[
F(f^j(w_N))=f^j(F(w_N))=f^j(f^N(x))=f^{N+j}(x)
\]
for every $j\ge0$. By induction on $j$, we prove that $d(z_{N+j},f^j(w_N))\le r_{N+j}$ for all $j\ge0$. Assume that $d(z_{N+j},f^j(w_N))\le r_{N+j}$ for some $j\ge0$. Then,
\begin{align*}
d(z_{N+j+1},f^{j+1}(w_N))&\le d(z_{N+j+1},f(z_{N+j}))+d(f(z_{N+j}),f(f^j(w_N)))\\
&\le 1/4+1/4=1/2.
\end{align*}
Since
\[
\delta_{N+j+1}=d(x_{N+j+1},f^{N+j+1}(x))
\]
and $z_{N+j+1}\in F^{-1}(x_{N+j+1})$, we have
\[
d(z_{N+j+1},w)\le r_{N+j+1}
\]
for some $w\in F^{-1}(f^{N+j+1}(x))$. Since $f^{j+1}(w_N)\in F^{-1}(f^{N+j+1}(x))$, by (2), we obtain
\begin{align*}
d(z_{N+j+1},w')&\ge d(f^{j+1}(w_N),w')-d(z_{N+j+1},f^{j+1}(w_N))\\
&\ge1-1/2=1/2>r_{N+j+1}
\end{align*}
for all $w'\in F^{-1}(f^{N+j+1}(x))$ with $w'\ne f^{j+1}(w_N)$. It follows that $w=f^{j+1}(w_N)$ and so
\[
d(z_{N+j+1},f^{j+1}(w_N))\le r_{N+j+1};
\]
therefore, the induction is complete. Let $z=f^{-N}(w_N)$ and note that
\[
f^N(F(z))=F(f^N(z))=F(w_N)=f^N(x).
\]
Since $f$ is a homeomorphism, we have $F(z)=x$, that is, $z\in F^{-1}(x)$. Moreover, we obtain
\[
\lim_{i\to\infty}d(z_i,f^i(z))=\lim_{j\to\infty}d(z_{N+j},f^{N+j}(z))=\lim_{j\to\infty}d(z_{N+j},f^j(w_N))=\lim_{j\to\infty}r_{N+j}=0,
\]
thus the lemma has been proved.
\end{proof}

The following lemma gives a sufficient condition for an inverse limit system to satisfy the s-limit shadowing property.   

\begin{lem}
Let $f\colon X\to X$ be a homeomorphism and let $F\colon(X,f)\to(X,f)$ be a factor map such that
\begin{itemize}
\item[(1)] $F$ is an open map,
\item[(2)] $d(v,v')\ge1$ for all $t\in X$ and $v,v'\in F^{-1}(t)$ with $v\ne v'$.
\end{itemize}
Let $(X_n,f_n)=(X,f)$ and $\pi_n=F\colon(X,f)\to(X,f)$ for all $n\ge1$. Let
\[
(Y,g)=\lim_\pi(X_n,f_n).
\]
If $f$ has the s-limit shadowing property, then $g$ satisfies the s-limit shadowing property.
\end{lem}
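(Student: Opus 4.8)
The plan is to unwind the definition of $g$ on the inverse limit $(Y,g)$ and reduce the s-limit shadowing of $g$ to the s-limit shadowing of $f$ coordinate-by-coordinate, using Lemma 4.2 as the inductive engine that lifts a limit-shadowing point from the $n$-th coordinate to the $(n+1)$-st. Concretely, fix $\epsilon>0$; since the metric on $Y$ is (equivalent to) a weighted sup over coordinates, it suffices to control finitely many coordinates $1\le n\le N$ within $e$ and then let the tail be handled automatically by the diameter bound. Given $\epsilon$, choose $N$ so that agreement of the first $N$ coordinates within some $e>0$ forces $D(\cdot,\cdot)\le\epsilon$ on $Y$. Apply the s-limit shadowing of $f=f_1$ to get $\delta_1>0$; then, because $F$ is an open map (hypothesis (1)), one obtains $\delta_2>0$ so that a $\delta_2$-limit-pseudo orbit projects under $F$ to a $\delta_1$-limit-pseudo orbit; iterating, get $\delta_N>0$, and set $\delta=\delta_N$.

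The core step is the lift. Suppose $\xi=(w^{(i)})_{i\ge0}$ is a $\delta$-limit-pseudo orbit of $g$ in $Y$, where $w^{(i)}=(w^{(i)}_n)_{n\ge1}$ and $\pi_n(w^{(i)}_{n+1})=w^{(i)}_n$. For each fixed $n$, $(w^{(i)}_n)_{i\ge0}$ is a limit-pseudo orbit of $f$ in $X_n=X$, with the $\delta$-bound inherited from the choice of $\delta_n$. Start at the first coordinate: by the s-limit shadowing of $f$, there is $z_1\in X$ limit-shadowing $(w^{(i)}_1)_{i\ge0}$ within $e$ (shrinking $\delta_1$ if needed to get the $e$-bound, not just limit shadowing). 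Now feed this into Lemma 4.2 with the roles $x_i:=w^{(i)}_1$, $x:=z_1$, $z_i:=w^{(i)}_2$ — hypotheses (1), (2) of Lemma 4.2 are exactly our hypotheses (1), (2) on $F$, hypothesis (3) is that $z_1$ limit-shadows the first-coordinate sequence, and hypothesis (4) is that $(w^{(i)}_2)_{i\ge0}$ is a limit-pseudo orbit of $f$ lying in $F^{-1}(w^{(i)}_1)$ for all $i$, which holds because $\pi_1=F$. Lemma 4.2 produces $z_2\in F^{-1}(z_1)$ limit-shadowing $(w^{(i)}_2)_{i\ge0}$. Repeating, we build $z_1,z_2,\dots$ with $F(z_{n+1})=z_n$, i.e.\ a point $z=(z_n)_{n\ge1}$ of $Y$; it limit-shadows $\xi$ coordinatewise in every coordinate, and for $1\le n\le N$ one may additionally arrange the uniform $e$-bound (one should check that the $r_i$'s in Lemma 4.2 can be kept below any prescribed threshold when the initial data are close, so the sup-distance in each of the first $N$ coordinates stays $\le e$; alternatively, replace $e$ by a smaller $e_n$ at each stage). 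Hence $D(g^i(z),w^{(i)})\le\epsilon$ for all $i$ and $D(g^i(z),w^{(i)})\to0$, which is the desired $\epsilon$-limit shadowing.

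The main obstacle I anticipate is bookkeeping the quantifiers so that the \emph{uniform} $\epsilon$-bound — not merely the limit-shadowing conclusion — survives the infinite iteration of Lemma 4.2. Lemma 4.2 as stated delivers only limit shadowing, with the shadowing radius governed by the sequence $r_i$; to get the clean bound $\sup_i D(g^i(z),w^{(i)})\le\epsilon$ one has to revisit its proof and note that the $r_i$ can be chosen uniformly small — in fact $\le$ any target tolerance — provided the input separation $\delta_i$ and the discrepancies $d(f(z_i),z_{i+1})$ are small, so that by tightening $\delta$ at each of the finitely many coordinates $1\le n\le N$ one forces the first $N$ coordinates of $z$ to stay within $e$ of $\xi$ for \emph{all} times, while coordinates $n>N$ are irrelevant for the $\epsilon$-bound by choice of $N$. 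A secondary, purely technical point is verifying that the composition of the ``open map'' moduli across $N$ stages (passing from a $\delta_{n+1}$-limit-pseudo orbit in $X_{n+1}$ to a $\delta_n$-limit-pseudo orbit in $X_n$, simultaneously with the $\lim=0$ condition) is legitimate; this is routine since $F$ is uniformly continuous and open, but it must be stated. Everything else — closedness and $g$-invariance are not even needed here, only the construction of the shadowing point — is straightforward from the definitions of the inverse limit and of the s-limit shadowing property.
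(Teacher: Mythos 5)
Your overall strategy --- reduce to coordinatewise limit shadowing in $X$ and use Lemma 4.2 as the engine that lifts a limit-shadowing point from one coordinate to the next --- is the same as the paper's, but you start the induction at coordinate $1$ and climb upward through all of the first $N$ coordinates, and this is exactly where your argument has a real gap. Lemma 4.2 delivers \emph{only} the limit condition $\lim_i d(f^i(z),z_i)=0$; it gives no uniform bound on $\sup_i d(f^i(z),z_i)$, and in its proof the index $N$ after which the estimate $d(z_{N+j},f^j(w_N))\le r_{N+j}$ holds depends on when the tail discrepancies of the given limit-pseudo orbit become small, which is \emph{not} controlled by the initial $\delta$. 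So after even one application of Lemma 4.2 you have lost the uniform $e$-bound at coordinate $2$, and your proposed repair (``revisit its proof and note that the $r_i$ can be kept below any prescribed threshold'') is precisely the missing step: it amounts to proving a strengthened version of Lemma 4.2, which you do not do. As written, the claim $D(g^i(z),w^{(i)})\le\epsilon$ for all $i$ does not follow.

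The paper avoids this entirely by exploiting the inverse limit structure in the choice of $N$: it picks $N$ and $\epsilon_N$ so that closeness of the \emph{single} coordinate $N$ within $\epsilon_N$ already forces $D(p,q)\le\epsilon$ (possible because for $n<N$ the coordinates are determined by $p_n=F^{N-n}(p_N)$, so uniform continuity of $F$ propagates the bound downward, while coordinates $n>N$ are damped by the metric on the product). It then applies the s-limit shadowing of $f$ once, at level $N$, to get a point $x_N$ that $\epsilon_N$-limit shadows $(x_N^{(i)})_i$ --- this single uniform bound is all that is needed for $\sup_i D(g^i(y),x^{(i)})\le\epsilon$. The coordinates $1\le n<N$ of the shadowing point are defined by pushing \emph{down}, $y_n=F^{N-n}(x_N)$, where continuity of $F$ gives the limit condition for free; Lemma 4.2 is used only to climb to coordinates $n>N$, where no uniform bound is required, only $\lim_i d(f^i(x_n),x_n^{(i)})=0$. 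If you reorganize your proof this way, the quantifier bookkeeping you worry about disappears and no strengthening of Lemma 4.2 is needed.
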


\begin{proof}
Let $D$ be a metric on $Y$. Given any $\epsilon>0$, we take $N\ge1$ and $\epsilon_N>0$ such that for any $p=(p_n)_{n\ge1}$, $q=(q_n)_{n\ge1}\in Y$, $d(p_N,q_N)\le\epsilon_N$ implies $D(p,q)\le\epsilon$. Since $f$ has the s-limit shadowing property, there is $\delta_N>0$ such that every $\delta_N$-limit-pseudo orbit of $f$ is $\epsilon_N$-limit shadowed by some point of $X$. We take $\delta>0$ such that 
$D(p,q)\le\delta$ implies $d(p_N,q_N)\le\delta_N$ for all $p=(p_n)_{n\ge1}$, $q=(q_n)_{n\ge1}\in Y$. Let $\xi=(x^{(i)})_{i\ge0}$ be a $\delta$-limit-pseudo orbit of $g$. Then, for every $i\ge0$, since $D(g(x^{(i)}),x^{(i+1)})\le\delta$, we have
\[
d(f(x_N^{(i)}),x_N^{(i+1)})=d(g(x^{(i)})_N,x_N^{(i+1)})\le\delta_N.
\]
Also, since $\lim_{i\to\infty}D(g(x^{(i)}),x^{(i+1)})=0$, we have
\[
\lim_{i\to\infty}d(f(x_n^{(i)}),x_n^{(i+1)})=\lim_{i\to\infty}d(g(x^{(i)})_n,x_n^{(i+1)})=0
\]
for all $n\ge1$. It follows that $(x_N^{(i)})_{i\ge0}$ is a $\delta_N$-limit-pseudo of $f$ and so $\epsilon_N$-limit shadowed by some $x_N\in X$. 
Then, since
\[
\lim_{i\to\infty}d(f(x_N^{(i)}),x_N^{(i+1)})=\lim_{i\to\infty}d(f^i(x_N),x_N^{(i)})=0
\]
and
\[
\lim_{i\to\infty}d(f(x_{N+1}^{(i)}),x_{N+1}^{(i+1)})=0,
\]
by Lemma 4.2, we have
\[
\lim_{i\to\infty}d(f^i(x_{N+1}),x_{N+1}^{(i)})=0
\]
for some $x_{N+1}\in F^{-1}(x_N)$. Inductively, we obtain $x_{N+k}\in X$, $k\ge0$, such that
\[
\lim_{i\to\infty}d(f^i(x_{N+k}),x_{N+k}^{(i)})=0
\]
and $x_{N+k+1}\in F^{-1}(x_{N+k})$ for all $k\ge0$. We define $y=(y_n)_{n\ge1}\in Y$ by
\begin{equation*}
y_n=
\begin{cases}
F^{N-n}(x_N)&\text{for all $1\le n\le N$}\\
x_n&\text{for all $n\ge N$}
\end{cases}
.
\end{equation*}
Given any $i\ge0$, by
\[
d(g^i(y)_N,x_N^{(i)})=d(f^i(y_N),x_N^{(i)})=d(f^i(x_N),x_N^{(i)})\le\epsilon_N,
\]
we obtain
\[
D(g^i(y),x^{(i)})\le\epsilon.
\]
Moreover, since
\begin{align*}
\lim_{i\to\infty}d(g^i(y)_n,x_n^{(i)})&=\lim_{i\to\infty}d(f^i(y_n),x_n^{(i)})\\
&=\lim_{i\to\infty}d(f^i(F^{N-n}(x_N)),x_n^{(i)})\\
&=\lim_{i\to\infty}d(F^{N-n}(f^i(x_N)),F^{N-n}(x_N^{(i)}))=0
\end{align*}
for all $1\le n\le N$; and
\begin{align*}
\lim_{i\to\infty}d(g^i(y)_{N+k},x_{N+k}^{(i)})&=\lim_{i\to\infty}d(f^i(y_{N+k}),x_{N+k}^{(i)})\\
&=\lim_{i\to\infty}d(f^i(x_{N+k}),x_{N+k}^{(i)})=0
\end{align*}
for all $k\ge0$, we obtain
\[
\lim_{i\to\infty}D(g^i(y),x^{(i)})=0.
\]
In other words, $\xi$ is $\epsilon$-limit shadowed by $y$. Since $\epsilon>0$ is arbitrary, we conclude that $g$ satisfies the s-limit shadowing property, completing the proof of the lemma.
\end{proof}

Finally, we give the example.

\begin{ex}
\normalfont
Let $\mathbb{Z}_2=\{0,1\}$. We define a metric $d$ on $\{0,1\}^\mathbb{Z}$ by
\[
d(x,y)=\sup_{n\in\mathbb{Z}}2^{-|n|}|x_n-y_n|
\]
for all $x=(x_n)_{n\in\mathbb{Z}}$, $y=(y_n)_{n\in\mathbb{Z}}\in\{0,1\}^\mathbb{Z}$. Note that the shift map
\[
\sigma\colon\{0,1\}^\mathbb{Z}\to\{0,1\}^\mathbb{Z}
\]
is an expansive mixing homeomorphism with the shadowing property and so satisfies the s-limit shadowing property (see, e.g.\:\cite{BGO}). We define a map $F\colon\{0,1\}^\mathbb{Z}\to\{0,1\}^\mathbb{Z}$ by for any $x=(x_n)_{n\in\mathbb{Z}}$, $y=(y_n)_{n\in\mathbb{Z}}\in\{0,1\}^\mathbb{Z}$, $y=F(x)$ if and only if 
\[
y_n=x_n+x_{n+1}
\]
for all $n\in\mathbb{Z}$. Note that $F$ gives a factor map 
\[
F\colon(\{0,1\}^\mathbb{Z},\sigma)\to(\{0,1\}^\mathbb{Z},\sigma).
\]

Given any $x=(x_n)_{n\in\mathbb{Z}},y=(y_n)_{n\in\mathbb{Z}},z=(z_n)_{n\in\mathbb{Z}},w=(w_n)_{n\in\mathbb{Z}}\in\{0,1\}^\mathbb{Z}$, assume that
\begin{itemize}
\item $(x,y)$ is an asymptotic pair for $\sigma$,
\item $(F(z),F(w))=(x,y)$.
\end{itemize}
Then, there is $N\ge0$ such that $x_n=y_n$ for all $n\ge N$. If $z_N=w_N$, we have $z_n=w_n$ for all $n\ge N$ and so $(z,w)$ is an asymptotic pair for $\sigma$. If $z_N\ne w_N$, we have $z_n\ne w_n$ for all $n\ge N$ and so
\[
\liminf_{i\to\infty}d(\sigma^i(z),\sigma^i(w))=1>0,
\]
thus $(z,w)$ is a distal pair for $\sigma$. In both cases, $(z,w)$ is not a scrambled pair for $\sigma$.

For any $m\ge1$ and $a=(a_n)_{n=-m}^m\in\{0,1\}^{2m+1}$, we define $b=(b_n)_{n=-m}^{m-1}\in\{0,1\}^{2m}$ by
\[
b_n=a_n+a_{n+1}
\]
for all $-m\le n\le m-1$. Letting
\[
S(a)=\{x=(x_n)_{n\in\mathbb{Z}}\colon x_n=a_n\:\text{ for all $-m\le n\le m$}\}
\]
and
\[
T(b)=\{x=(x_n)_{n\in\mathbb{Z}}\colon x_n=b_n\:\text{ for all $-m\le n\le m-1$}\},
\]
we obtain $F(S(a))=T(b)$, an open subset of $\{0,1\}^\mathbb{Z}$. Since $m\ge1$ and $a=(a_n)_{n=-m}^m\in\{0,1\}^{2m+1}$ are arbitrary, it follows that $F$ is an open map. Given any $y=(y_n)_{n\in\mathbb{Z}}\in\{0,1\}^\mathbb{Z}$, we define $\hat{y}=(\hat{y}_n)_{n\in\mathbb{Z}}\in\{0,1\}^\mathbb{Z}$ by $\hat{y}_n=y_n+1$ for all $n\in\mathbb{Z}$. Then, for any $x\in\{0,1\}^\mathbb{Z}$, taking $y\in F^{-1}(x)$, we have $F^{-1}(x)=\{y,\hat{y}\}$. Note that $d(y,\hat{y})=1$ for all $y\in\{0,1\}^\mathbb{Z}$.

Let $(X_n,f_n)=(\{0,1\}^\mathbb{Z},\sigma)$ and $\pi_n=F\colon(\{0,1\}^\mathbb{Z},\sigma)\to(\{0,1\}^\mathbb{Z},\sigma)$ for all $n\ge1$. Let
\[
(Y,g)=\lim_\pi(X_n,f_n)
\]
and let $D$ be a metric on $Y$. Since $\{0,1\}^\mathbb{Z}$ is totally disconnected and $\sigma\colon\{0,1\}^\mathbb{Z}\to\{0,1\}^\mathbb{Z} $ is a mixing homeomorphism,
\begin{itemize}
\item $Y$ is totally disconnected,
\item $g$ is a mixing homeomorphism.
\end{itemize}
By Lemmas 4.1 and 4.3, we obtain the following properties
\begin{itemize}
\item $g$ is h-expansive,
\item $g$ has the s-limit shadowing property.
\end{itemize}
We shall show that
\begin{itemize}
\item $g$ is not countably-expansive,
\item $g$ satisfies $Y_e=\emptyset$, where
\[
Y_e=\{q\in Y\colon\Gamma_\epsilon(q)=\{q\}\:\text{ for some $\epsilon>0$}\}.
\]
\end{itemize}
Let $q=(q_n)_{n\ge1}\in Y$ and $N\ge1$. Let $F^{-1}(x)=\{x^a,x^b\}$ for all $x\in\{0,1\}^\mathbb{Z}$. Then, for all $c=(c_k)_{k\ge1}\in\{a,b\}^\mathbb{N}$, we define $q(c)=(q(c)_n)_{n\ge1}\in Y$ by $q(c)_n=q_n$ for all $1\le n\le N$; and 
\[
q(c)_{N+k}=q(c)_{N+k-1}^{c_k}
\]
for all $k\ge 1$. Given any $\epsilon>0$, if $N\ge1$ is large enough, $q(c)$, $c\in\{a,b\}^\mathbb{N}$, satisfies $q(c)\in\Gamma_\epsilon(q)$ for all $c\in\{a,b\}^\mathbb{N}$. Since
\[
\{q(c)\colon c\in\{a,b\}^\mathbb{N}\}
\]
is an uncountable set, it follows that $g$  is not countably-expansive. Since $q\in Y$ and $\epsilon>0$ are arbitrary, it also follows that $Y_e=\emptyset$.
\end{ex}

\end{document}